\theoremstyle{plain}
\newtheorem{pro}{\hspace{6mm}Proposition}[section]
\newtheorem{lem}{\hspace{6mm}Lemma}[section]
\theoremstyle{definition}
\theoremstyle{remark}
\newcommand{\V}[1]{\mathbf{#1}}
\newcommand{\email}[1]{\href{mailto:#1}{#1}}
\title{
Parameter-free inexact block Schur complement preconditioning for linear poroelasticity
under a hybrid Bernardi-Raugel and weak Galerkin finite element discretization
}
\author{
Weizhang Huang\thanks{Department of Mathematics, the University of Kansas, 1460 Jayhawk Blvd, Lawrence, KS 66045, USA (\email{whuang@ku.edu}).}
\and
Zhuoran Wang\thanks{Department of Mathematics, the University of Kansas, 1460 Jayhawk Blvd, Lawrence, KS 66045, USA (\email{wangzr@ku.edu}).}
}
\date{} 
\begin{document}

\maketitle

\textbf{Abstract.}
This work investigates inexact block Schur complement preconditioning for linear poroelasticity problems
discretized using a hybrid approach: Bernardi–Raugel elements for solid displacement and
lowest-order weak Galerkin elements for fluid pressure. When pure Dirichlet boundary conditions
are applied to the displacement, the leading block of the resulting algebraic system becomes almost singular
in the nearly incompressible (locking) regime, hindering efficient iterative solution.
To overcome this, the system is reformulated as a three-field problem with an inherent regularization
that maintains the original solution while ensuring nonsingularity. Analysis shows that
both the minimal residual (MINRES) and generalized minimal residual (GMRES) methods,
when preconditioned with inexact block diagonal and triangular Schur complement preconditioners,
achieve convergence independent of mesh size and the locking parameter for the regularized system.
Similar theoretical results are established for the situation with displacement subject to
mixed boundary conditions, even without regularization. Numerical experiments in 2D and 3D
confirm the benefits of regularization under pure Dirichlet conditions and
the robustness of the preconditioners with respect to mesh size and the locking parameter
in both boundary condition scenarios. Finally, a spinal cord simulation with discontinuous
material parameters further illustrates the effectiveness and robustness of the proposed iterative solvers.

\vspace{5pt}

\noindent
\textbf{Keywords:}
Poroelasticity, Schur complement preconditioning, Locking-free, Bernardi-Raugel elements, Inherent regularization

\vspace{5pt}

\noindent
\textbf{Mathematics Subject Classification (2020):}
65N30, 65F08, 65F10, 74F10

\section{Introduction}
\label{SEC:intro}

We consider the iterative solution of linear poroelasticity problems governed by
\begin{align}
\begin{cases}
  \displaystyle
  -\nabla \cdot \sigma(\V{u})
  + \alpha \nabla p
  = \mathbf{f}, \quad \text{ in } \Omega  \times (0,T],
\\
  \displaystyle
  \partial_t
  \big ( \alpha \nabla \cdot \V{u} +  c_0 p \big)
    - \nabla \cdot \left( \kappa \nabla p \right)
  = s, \quad \text{ in } \Omega  \times (0,T],
\end{cases}
\label{BVP-poro}
\end{align}
where
$\Omega \subset \mathbb{R}^d \, (d\geq 1)$ is a bounded and connected Lipschitz domain,
$T > 0$ denotes the final time,
$ \V{u} $ is the solid displacement,
$ \lambda={\nu E}/{\big((1-2\nu)(1+\nu)\big)} $ and $ \mu={E}/{\big(2(1+\nu)\big)} $  are the Lam\'{e} constants, $E$ is Young's modulus,
$\nu$ is Poisson's ratio,
$\sigma(\V{u}) = 2\mu \varepsilon(\V{u}) + \lambda (\nabla \cdot \V{u}) \mathbf{I}$
is the Cauchy stress for the solid,
$ \varepsilon(\V{u}) = \frac{1}{2} ( \nabla \V{u} + (\nabla \V{u})^T ) $ is the strain tensor, 
$\mathbf{I} $ is the identity operator,
$ \mathbf{f} $ is a body force,
$ p $ is the fluid pressure,
$ s $ is the fluid source,
$ \alpha $ (usually close to $1$) is the Biot-Willis constant accounting for the coupling of the solid and fluid,
$ c_0 > 0 $ is the constrained storage capacity,
and $ \kappa $ is the permeability constant.
We consider boundary conditions (BCs) for $u$ and $p$ in a general form as 
\begin{align}
\begin{cases}
&\V{u} = \V{u}_D, \quad \text{on } {\Gamma_{uD}}  \times (0,T], \\
& 
( \V{\sigma} - \alpha p \V{I} ) \cdot \V{n} = \V{t}_N, \quad \mbox{ on } \Gamma_{uN} \times (0,T] , \\
& p = p_D, \quad \text{on } {\Gamma_{pD}}  \times (0,T], \\
& \kappa \nabla p \cdot \V{n} = p_N, \quad \mbox{ on } \Gamma_{pN} \times (0,T],
\end{cases}
\label{BC-1}
\end{align}
where $\Gamma_{uN} \cup \Gamma_{uD} = \partial \Omega$ and $\Gamma_{pN} \cup \Gamma_{pD}
= \partial \Omega$, the Dirichlet and Neumann parts are non-overlapping, and
$\V{u}_D$, $\V{t}_N$, $p_D$, and $p_N$ are given functions.
Notice that the Neumann BC for the solid displacement on $\Gamma_{uN}$ has taken
the effects of the fluid pressure into consideration.
The initial conditions are specified as
\begin{align}
 \V{u} = \V{u}_0,
  \quad
  p = p_0,
  \quad
  \text{on} \; \Omega \times \{t=0\}.
  \label{IC-1}
\end{align}

The weak formulation of \eqref{BVP-poro} is 
to seek $(\V{u}(\cdot, t), p(\cdot, t)) \in (H^1(\Omega))^d\times H^1(\Omega)$, $0< t \le T$
satisfying $\V{u}|_{\Gamma_{uD}} = \V{u}_D$ and $p|_{\Gamma_{pD}} = \V{p}_D$ (in the weak sense) and
\begin{equation}
\begin{cases}
    2 \mu \Big( \varepsilon (\V{u}), \varepsilon (\V{v}) \Big)
      + \lambda ({\nabla \cdot \V{u}}, {\nabla \cdot \V{v}})
      - \alpha (p, {\nabla \cdot \V{v}})
      = (\mathbf{f}, \V{v}) + (\V{t}_N, \V{v})_{\Gamma_{uN}},
      \quad \forall \V{v}\in (H_{0}^1(\Omega))^d,
\\
    \displaystyle
    -\alpha (\nabla \cdot \V{u}_t, q)
    - c_0 \left( p_t, q \right)
      - \left( {\kappa} \nabla p, \nabla q \right)
    \displaystyle
    =  -\left( s, q \right) - (p_N,q)_{\Gamma_{pN}},
    \quad \forall q\in H_{0}^1(\Omega),
\end{cases}
  \label{poro_variationalform}
\end{equation}
where $(\cdot, \cdot)$,  $(\cdot, \cdot)_{\Gamma_{uN}}$, and $(\cdot, \cdot)_{\Gamma_{pN}}$ are the $L^2$ inner product over $\Omega$, $\Gamma_{uN}$, and $ \Gamma_{pN}$, respectively.
The test functions $\V{v} \in (H_{0}^1(\Omega))^d$ and $q \in H_0^1(\Omega) $
vanish on the Dirichlet boundary $\Gamma_{uD}$ and $\Gamma_{pD}$, respectively.

For the analysis of iterative solution, we need to distinguish two BC scenarios: $|\Gamma_{uN}| = 0$
(that will be referred to as the pure Dirichlet boundary condition (DBC) scenario) and $|\Gamma_{uN}| > 0$
(that will be referred to as the mixed BC scenario).
To explain this, we notice that the pressure related term $(p, \nabla\cdot\mathbf{v})$
in the first equation of (\ref{poro_variationalform}) can be expressed as
\begin{align}
  (p, \nabla\cdot\mathbf{v}) = - (\nabla p, \V{v}) + (p \V{n}, \V{v})_{\Gamma_{uN}}, \quad \V{v} \in (H_{0}^1(\Omega))^d.
\label{grad-1}  
\end{align}
For the pure DBC scenario, the second term on the right-hand side vanishes and
$(\bullet, \nabla\cdot\bullet)$ represents the gradient for the first argument, which in turn implies that
the operator is singular and for any solution $p$, $p+c$ is also a solution for any constant $c$.
On the other hand, for the mixed BC scenario the second term on the right-hand side of
(\ref{grad-1}) does not vanish in general and
the operator is not singular. 
The singularity of the operator will affect the performance and thus the choice of
iterative methods for solving the entire system.

Besides the singularity issue, the value of $\lambda$ is also a major challenge
for solving linear poroelasticity numerically.
As $\lambda \to \infty$,
we have $\nabla \cdot \V{u} \to 0$, which indicates the solid becomes nearly incompressible.
This causes the so-called locking phenomenon where numerical oscillations occur in the computed solution,
leading to deteriorate convergence order in discretization;
e.g. see \cite{phillipsWheeler_CompGeo_2009,Yi_SISC_2017}.
A variety of locking-free numerical methods have been developed
and applied to poroelasticity problems, including mixed finite element methods \cite{Ricardo2_SINUM_2016, RIVA6_SISC_2025}, hybridizable discontinuous Galerkin methods \cite{KRAUS4_CMAME_2021}, enriched Galerkin methods \cite{LeeYi_JSC_2023}, virtual element methods \cite{LiangRui_ApplNumMath_2024}, weak Galerkin finite element methods \cite{WangTavLiu_JCAM_2021}, discontinuous Galerkin methods \cite{ZhaoChungPark_IMA_2023}, among others.
Discretizing poroelasticity problems yields large and ill-conditioned linear systems that are needed to solve
at each time step. Tremendous effort has been made in solving those systems efficiently.
The developed methods include splitting methods (such as fixed-stress, fixed-strain, drained,
and undrained methods in 
\cite{AltmannDeiml_SISC_2025,Both5_ApplMathLett_2017,Hong4_MathModelMethApplSci_2020,RIVA6_SISC_2025})
that solve the solid and fluid sequentially at each time step
and block Schur complement preconditioning techniques (e.g. see
\cite{Hong-2023-MathComp,HuangWang_2025, Huang-Wang-2025-poro-reg,Lee-SISC-2017,Rodrigo6_SeMA_2024})
that improve convergence and efficiency of iterative solution of the whole system.

In this work, we investigate inexact block Schur complement preconditioning for linear poroelasticity problems.
We employ Bernardi–Raugel elements \cite{BernardiRaugel_MathComp_1985} for the displacement discretization and the lowest-order weak Galerkin elements for the pressure, coupled with implicit Euler time discretization.
The resulting hybrid scheme is known to be locking-free,
achieve optimal-order convergence for both displacement and pressure, and work for
both pure DBC and mixed BC scenarios; cf. \cite{WangTavLiu_JCAM_2021}.
When the displacement is subject to pure Dirichlet BCs,
the (1,2) and symmetric (2,1) blocks of the numerical system are singular.
Moreover, under the locking condition (i.e., $\lambda \to \infty$), the leading block corresponding to the solid component of the coupled system becomes nearly singular.
These features make the system difficult to solve efficiently.
To address this, we first reformulate the two-field system into
a three-field problem by introducing a numerical pressure variable and then add
an inherent regularization term to the third equation. 
This regularization preserves the original solution while ensuring nonsingularity of the new system.
Moreover, the eigenvalues of its Schur complement, preconditioned
by a simple approximation, remain bounded below and above by positive constants.
For the regularized system, we study the convergence of the minimal residual method (MINRES)
and the generalized minimal residual method (GMRES), preconditioned with inexact block diagonal
and triangular Schur complement preconditioners, respectively.
The residual bounds for MINRES and GMRES show that both iterative solvers converge independently of $h$ (the mesh size) and $\lambda$ (the locking parameter).

When the displacement is subject to mixed boundary conditions,
the (1,2) and (2,1) blocks of the two-field system are nonsingular.
As a result, the entire system is nonsingular even in the locking regime
and there is no need for regularization.
We consider the three-field formulation with the numerical pressure variable
and show that MINRES and GMRES, when preconditioned with inexact block Schur complement preconditioners,
also exhibit parameter-free convergence.

It is interesting to point out that two recent works,
\cite{HuangWang_2025} and \cite{Huang-Wang-2025-poro-reg},
are closely related to the current study. (Indeed, the current work
can be regarded as an extension of those two.) They all study
inexact block Schur complement preconditioning for linear poroelasticity
although there are significant differences between them.
\cite{HuangWang_2025} and \cite{Huang-Wang-2025-poro-reg} employ
the weak Galerkin discretization for both displacement and pressure,
which has been shown (e.g., see \cite{Wang2TavLiu_JCAM_2024})
to be locking-free and achieve optimal-order convergence only for the pure DBC scenario.
for the pure DBC scenario, it is still unclear if those features hold for the mixed BC scenario.
Moreover, no regularization is used in \cite{HuangWang_2025} and the corresponding
preconditioned system contains a small eigenvalue for the locking regime.
As a consequence, the convergence factor of preconditioned GMRES is parameter-free but not
for the asymptotic error constant. 
Parameter-free convergence is established in \cite{Huang-Wang-2025-poro-reg} using
an inherent regularization strategy.

A key motivation for this work is real-world applications including spinal cord simulation,
which typically involve mixed boundary conditions for solid displacement.
In this study, we adopt a different discretization approach: using Bernardi–Raugel elements
for the solid displacement and lowest-order weak Galerkin elements for the pressure.
This hybrid method is known to be locking-free and effective for both pure Dirichlet and
mixed displacement BCs; however, it has not yet been analyzed in the context of iterative solvers.
The goal of this work is to analyze block preconditioning and investigate the convergence behavior
of MINRES and GMRES for both types of boundary conditions, and to apply the results to
the spinal cord simulation.

The rest of paper is organized as follows.
In Section~\ref{SEC:formulation}, the discretization of the poroelasticity problem
(\ref{BVP-poro}) using hybrid Bernardi–Raugel and weak Galerkin elements
is described and its properties are discussed.
In Section~\ref{sec::DBC}, the inherent regularization strategy is discussed
for the linear poroelasticity problem subject to the pure DBC, and the convergence of MINRES and GMRES with inexact block Schur complement preconditioning is analyzed for the resulting regularized system.
Sections~\ref{sec::NBC} studies the convergence of MINRES and GMRES with block preconditioning for the system under mixed boundary conditions.
Numerical results for poroelasticity problems in both two and three dimensions are presented in Section~\ref{SEC:numerical}, confirming the parameter-free convergence of MINRES and GMRES, and illustrating the effectiveness of the proposed preconditioning strategies for a real-world application in spinal cord simulation
with discontinuous parameters.
Conclusions are drawn in Section~\ref{SEC:conclusions}.


\section{Discretization for poroelasticity}
\label{SEC:formulation}

In this section we describe the Bernardi–Raugel (BR) elements coupled with the lowest-order
WG elements for the discretization of the poroelasticity problem \eqref{BVP-poro}
and discuss their properties.

We first consider the discretization of \eqref{BVP-poro} subject to the boundary condition (\ref{BC-1}).
Assume that a quasi-uniform simplicial mesh $\mathcal{T}_h = \{K\}$  is given for $\Omega$,
where $h$ is the maximum element diameter.
We also assume that the mesh $\mathcal{T}_h$ is connected in the sense that any two of its elements
are connected by a chain of elements sharing interior facets.
For a generic element $K$, denote its vertices by $\V{x}_i$, $i = 0, ..., d$ and the facet facing $\V{x}_i$
by $e_i$ ($i = 0, ..., d$).
Then, the local $BR_1$ space on $K$ is defined as
\[
BR_1(K) = (P_1(K))^d + \text{span} \{ \V{n}_i \prod\limits_{\V{x}_j \in e_i} \lambda_j,
\; i = 0, ..., d\},
\]
where $\lambda_i$'s are barycentric coordinates/Lagrange-type linear basis functions,
$\V{n}_i$ is the unit outward normal to the $i$-th facet $e_i$,
and the product $\prod_{\V{x}_j \in e_i} \lambda_j$ takes all the linear basis functions
according to the vertices on $e_i$.
Having defined $BR_1$, we define the discrete weak function spaces as
\begin{align}
     \displaystyle
    \V{V}_h
     & = \big \{ \V{u}_h \in (H_0^1(\Omega))^d: \;
      \V{u}_h|_K \in BR_1(K), \;
      \forall K \in \mathcal{T}_h\big \} \cap \big \{ \V{u}_h|_{\Gamma_{uD}} = \V{u}_D \big \},
    \\ 
    \displaystyle
    W_h  & = \big \{p_h=\{p^\circ_h, p^\partial_h\}:\;
    p^\circ_h|_{K} \in P_0(K),\;
    p^\partial_h|_e \in P_0(e), \;
    \forall K \in \mathcal{T}_h,\;  e \in \partial K \big \}\cap \big \{ p_h |_{\Gamma_{pD}} = p_D \big \},
    \\
    \displaystyle
     \mathcal{P}_0 &= \big \{p_h=\{p^\circ_h\} :\; p^\circ_h|_K\in P_0(K),\; \forall K\in\mathcal{T}_h\big \} ,
     \label{P0}
\end{align}
where $P_0(K)$ and $P_0(e)$ denote the spaces of constant polynomials defined on element $K$ and facet $e$, respectively.
Notice that functions in $W_h$ consist of two parts, one defined in the interiors of the mesh elements
and the other on their facets.

Define the discrete weak gradient operator $\nabla_w: W_h \rightarrow RT_0(\mathcal{T}_h)$  for $p_h = (p_h^{\circ},p_h^{\partial})$ as
\begin{equation}
\label{weak-grad-1}
  (\nabla_w p_h, \mathbf{w})_K
  = (p^\partial_h, \mathbf{w} \cdot \mathbf{n})_{\partial K}
  - ( p^\circ_h , \nabla \cdot \mathbf{w})_K,
  \quad \forall \mathbf{w} \in RT_0(K),\quad \forall K \in \mathcal{T}_h ,
\end{equation}
where 
$\mathbf{n}$ is the unit outward normal to $\partial K$,
$(\cdot, \cdot)_K$ and $( \cdot, \cdot )_{\partial K}$ are the $L^2$ inner product on $K$ and $\partial K$, respectively,
and $RT_0(K)$ is the lowest-order Raviart-Thomas space defined as
\[
RT_0(K) = (P_0(K))^d + \mathbf{x} \, P_0(K).
\]
The analytical expression of $\nabla_w p_h$ can be obtained; see, e.g., \cite{HuangWang_CiCP_2015}.

For temporal discretization we consider a time partition of the interval $(0,T]$ given by  $0 = t_0 < t_1< ... <t_N = T$ and denote the time step as $\Delta t_n = t_n - t_{n-1}$. Using the implicit Euler scheme for temporal discretization and
the Bernardi-Raugel elements coupled with lowest-order WG for spatial discretization,
from (\ref{poro_variationalform}) we obtain the time marching scheme as seeking $\V{u}_h^n \in \V{V}_h$ and $p_h^n \in W_h$ such that 
\begin{equation}
\begin{cases}
\displaystyle
2\mu \sum_{K \in\mathcal{T}_h} (\varepsilon (\V{u}_h^n),\varepsilon (\V{v}_h))_K
+\lambda \sum_{K \in\mathcal{T}_h}(\overline{\nabla \cdot\V{u}_h^n}, \overline {\nabla \cdot\V{v}_h})_K
\\
\displaystyle
\qquad \qquad \qquad 
- \alpha \sum_{K \in\mathcal{T}_h} (p_h^{\circ,n}, \nabla \cdot\V{v}_h)_K
= \sum_{K \in\mathcal{T}_h}(\mathbf{f}^n,\V{v}_h )_K + \sum_{e \in \Gamma_{uN}} (\V{t}_N, \V{v})_{e} , \quad \forall \V{v}_h \in \V{V}_h^0,
\\
\displaystyle
-\alpha \sum_{K \in\mathcal{T}_h}  (\nabla \cdot\V{u}_h^n,q_h^{\circ})_K
- c_0 \sum_{K \in\mathcal{T}_h} (p_h^{\circ,n}, q_h^{\circ})_K
- \Delta t_n \sum_{K \in\mathcal{T}_h}(\kappa\nabla_w p_h^n,\nabla_wq_h)_K
\\
\displaystyle
\qquad \qquad \qquad 
= - \Delta t_n \sum_{K \in\mathcal{T}_h} (s^n,q_h^{\circ})_K
    -\alpha \sum_{K \in\mathcal{T}_h}  (\nabla \cdot\V{u}_h^{n-1},q_h^{\circ})_K
\\
\displaystyle
\qquad \qquad \qquad \qquad \qquad \qquad \qquad
- c_0 \sum_{K \in\mathcal{T}_h} (p_h^{\circ,n-1}, q_h^{\circ})_K - \sum_{e \in \Gamma_{pN}} (p_N, q_h^{\partial})_e
    ,\quad \forall q_h \in {W}_h^0 ,
\end{cases}
\label{EqnFullDisc1}
\end{equation}
where $\V{V}_h^0$ and $W_h^0$ are the homogeneous counterparts of $\V{V}_h$ and $W_h$, respectively,
with $\V{v}_h|_{\Gamma_{uD}} = \V{0}$ and $q_h|_{\Gamma_{pD}} = 0$, and
$\overline{\nabla \cdot \V{u}_h}$ denotes the elementwise $L^2$-projection of $\V{u}_h$
to the space of constants, i.e., $\overline{\nabla \cdot \V{u}_h}|_K \in P_0(K)$ for all
$K \in \mathcal{T}_h$. Notice that the use of $\overline{\nabla \cdot \V{u}_h}$
(and $\overline{\nabla \cdot \V{v}_h}$) for the dilation term of the solid in the first equation
is similar to the reduced integration technique \cite{MalkusHughes_1978} that is crucial for the locking-free
feature of the scheme.
The following lemma shows that the above discretization scheme is locking free and
achieves optimal-order convergence for both displacement and pressure.

\begin{lem}{\cite[Theorem 2]{WangTavLiu_JCAM_2021}}
\label{pro:poro_conv}
Let $ (\V{u}, p) $ and $(\V{u}_h,p_h) $ be the exact and numerical solutions of poroelasticity problem (\ref{BVP-poro}).
Under suitable regularity assumptions for the exact solution,
there holds 
\begin{align}
  \displaystyle
  \max_{1\leq n\leq N} \|\V{u}^n - \V{u}_h^n \|_{1}^2
   + \Delta t \sum_{n = 1}^N\| p^n-p_h^n \|^2 
  \displaystyle
  \leq C_1 h^2 + C_2 (\Delta t)^2,
  \label{pro:poro_conv-2}
\end{align}
where $\|\cdot \|_1$ and $\|\cdot \|$ denote the $H^1$ and $L^2$ norm, respectively,
and constants $C_1$ and $C_2$ depend on $\V{u}$ and $p$ but not on $h$, $\Delta t$, and $\lambda$.
\end{lem}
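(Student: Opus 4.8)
\emph{Strategy.} The plan is to run a standard projection-based energy error analysis, with all constants made independent of $\lambda$ by exploiting two facts: the mean-divergence-preserving property of the Bernardi--Raugel interpolant, and the assumed $\lambda$-uniform regularity of the exact solution (in particular boundedness, in the relevant norms, of the Herrmann-type quantity $\lambda\,\nabla\cdot\V{u}$). Introduce the BR interpolation operator $\Pi_h$ onto $\V{V}_h$ and the weak-Galerkin projection $Q_h=(Q_h^\circ,Q_h^\partial)$ onto $W_h$ given by the elementwise $L^2$-projection onto $P_0(K)$ and the facetwise $L^2$-projection onto $P_0(e)$. Split $\V{u}^n-\V{u}_h^n=\V{\rho}_u^n+\V{\theta}_u^n$ with $\V{\rho}_u^n=\V{u}^n-\Pi_h\V{u}^n$, $\V{\theta}_u^n=\Pi_h\V{u}^n-\V{u}_h^n$, and $p^n-p_h^n=\rho_p^n+\theta_p^n$ with $\rho_p^n=p^n-Q_hp^n$, $\theta_p^n=Q_hp^n-p_h^n$. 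Standard approximation theory gives $\|\V{\rho}_u^n\|_1\le Ch$ and $\|\rho_p^n\|\le Ch$ (with $C$ depending on norms of $\V{u}^n,p^n$); the crucial extra facts are $\overline{\nabla\cdot\V{\rho}_u^n}=0$ on every element (the BR interpolant reproduces the mean divergence, since its facet normal fluxes match those of $\V{u}^n$ in the mean) and $(\rho_p^{\circ,n},q_h^\circ)_K=0$ for elementwise constants, so that the interpolation errors drop out of precisely those terms carrying $\lambda$ or the bad coupling. It then suffices to bound $\V{\theta}_u^n$ and $\theta_p^n$.

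\emph{Error equations and energy identity.} Subtracting the scheme (\ref{EqnFullDisc1}) from the weak form of (\ref{BVP-poro}) at time $t_n$ tested against discrete functions gives consistency (error) equations for $(\V{\theta}_u^n,\theta_p^n)$ whose right-hand sides collect (i) the implicit-Euler truncation $\partial_t(\cdot)|_{t_n}-\Delta t_n^{-1}(\cdot^n-\cdot^{n-1})$, (ii) the projection errors $\V{\rho}_u^n,\rho_p^n$ and their time increments, and (iii) the weak-Galerkin consistency remainder from replacing $(\kappa\nabla p^n,\nabla q)$ by $(\kappa\nabla_wQ_hp^n,\nabla_wq_h)$ for $q_h\in W_h^0$. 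Following the classical Biot energy technique, test the momentum error equation at level $n$ with $\V{v}_h=\V{\theta}_u^n-\V{\theta}_u^{n-1}$, test the mass error equation at level $n$ with $q_h=\theta_p^n$, and add; the two coupling terms cancel because $\theta_p^{\circ,n}$ and $\overline{\nabla\cdot(\V{\theta}_u^n-\V{\theta}_u^{n-1})}$ are elementwise constant, so that $(\theta_p^{\circ,n},\nabla\cdot(\V{\theta}_u^n-\V{\theta}_u^{n-1}))=(\theta_p^{\circ,n},\overline{\nabla\cdot(\V{\theta}_u^n-\V{\theta}_u^{n-1})})$ appears with opposite signs. Using $2a_h(x,x-y)\ge a_h(x,x)-a_h(y,y)$ with $a_h(\V{w},\V{w})=2\mu\sum_K\|\varepsilon(\V{w})\|_K^2+\lambda\sum_K\|\overline{\nabla\cdot\V{w}}\|_K^2$, one obtains
\[
\tfrac12\big[a_h(\V{\theta}_u^n,\V{\theta}_u^n)-a_h(\V{\theta}_u^{n-1},\V{\theta}_u^{n-1})\big]+\tfrac{c_0}{2}\big[\|\theta_p^{\circ,n}\|^2-\|\theta_p^{\circ,n-1}\|^2\big]+\Delta t_n\|\kappa^{1/2}\nabla_w\theta_p^n\|^2\le\langle R_u^n,\V{\theta}_u^n-\V{\theta}_u^{n-1}\rangle-\Delta t_n\langle R_p^n,\theta_p^n\rangle .
\]
Summing over $n$, using $\V{\theta}_u^0=0$, $\theta_p^0=0$, Korn's inequality to get $\mu\|\V{\theta}_u^m\|_1^2\le C\,a_h(\V{\theta}_u^m,\V{\theta}_u^m)$ with a $\lambda$-independent constant, and the discrete Poincaré inequality $\|\theta_p^n\|_{W_h}\le C\|\nabla_w\theta_p^n\|$ for $\theta_p^n\in W_h^0$ to absorb the $\langle R_p^n,\theta_p^n\rangle$ term, yields $\max_m\|\V{\theta}_u^m\|_1^2+\Delta t\sum_n\|\nabla_w\theta_p^n\|^2$ bounded by the residual contributions; a discrete Gronwall step is needed because $\sum_n\langle R_u^n,\V{\theta}_u^n-\V{\theta}_u^{n-1}\rangle$ must be summed by parts, producing a term $\sum_n\Delta t_n\langle\partial_tR_u,\V{\theta}_u^{n-1}\rangle$.

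\emph{Residual bounds and the pressure $L^2$ estimate.} Each residual contribution is estimated in the usual way: the implicit-Euler truncation gives $C(\Delta t)^2$ times $L^2$-in-time norms of second time derivatives of $\V{u},p$; the projection-error terms give $Ch^2$ times norms of $\V{u},p,\partial_t\V{u},\partial_tp$; the weak-Galerkin consistency remainder, for the lowest-order $RT_0$-based weak gradient, gives $Ch^2$ times norms of $p$. The decisive point is that every residual term carrying $\lambda$ either involves only $\overline{\nabla\cdot\V{\rho}_u^n}$ (or its time increment), which vanishes, or is the reduced-integration consistency term $\lambda\sum_K(\nabla\cdot\V{u}^n-\overline{\nabla\cdot\V{u}^n},\nabla\cdot\V{v}_h-\overline{\nabla\cdot\V{v}_h})_K=\sum_K(\lambda\nabla\cdot\V{u}^n-\overline{\lambda\nabla\cdot\V{u}^n},\nabla\cdot\V{v}_h-\overline{\nabla\cdot\V{v}_h})_K\le Ch\,|\lambda\nabla\cdot\V{u}^n|_1\,\|\V{v}_h\|_1$, which is $\lambda$-uniform under the assumed regularity of $\lambda\nabla\cdot\V{u}$; no power of $\lambda$ survives. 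This gives the displacement part of (\ref{pro:poro_conv-2}). For the $L^2$-in-space, $L^2$-in-time pressure bound, use the energy dissipation just estimated: $\Delta t\sum_n\|\theta_p^n\|_{W_h}^2\le C\,\Delta t\sum_n\|\nabla_w\theta_p^n\|^2\le C_1h^2+C_2(\Delta t)^2$ by the discrete Poincaré inequality (alternatively, for robustness as $c_0\to0$, from the $\lambda$-uniform inf-sup stability of the $(\V{V}_h,\mathcal{P}_0)$ pair applied to the momentum error equation). A final triangle inequality with the $\V{\rho}_u^n,\rho_p^n$ bounds closes the proof.

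\emph{Main obstacle.} The whole difficulty is keeping every constant independent of $\lambda$. This rests entirely on (a) the identity $\overline{\nabla\cdot\Pi_h\V{v}}=\overline{\nabla\cdot\V{v}}$, so that the potentially $O(\lambda)$ interpolation terms are not merely bounded but identically zero, together with the reduced-integration treatment of the dilation term, which lets the remaining $\lambda$-weighted term be rewritten in terms of $\lambda\nabla\cdot\V{u}$ and absorbed; and (b) the $\lambda$-uniform Korn/inf-sup bounds for the BR space. Everything else — the approximation estimates, the telescoping energy identity, the summation by parts, and the discrete Gronwall argument — is routine; the only genuine bookkeeping care lies in tracking which residual terms carry $\lambda$ and confirming, term by term, that each is either annihilated by $Q_h$ or $\Pi_h$ or reducible to a norm of $\lambda\nabla\cdot\V{u}$ that the regularity hypotheses control uniformly.
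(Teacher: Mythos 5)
This lemma is not proved in the paper: it is quoted verbatim as \cite[Theorem 2]{WangTavLiu_JCAM_2021}, so there is no in-paper argument to compare yours against. Judged on its own, your outline is the standard locking-free energy analysis that underlies results of this type, and it is essentially sound: the projection splitting, the cancellation of the coupling terms after testing the momentum error equation with the displacement increment and the mass error equation with $\theta_p^n$, and the identification of the only two $\lambda$-carrying ingredients (the mean-divergence-preserving, i.e.\ Fortin, property of the Bernardi--Raugel interpolant --- which is exactly the property \eqref{inf-sup-CGWG-1} the paper invokes for the inf-sup condition --- and the rewriting of the reduced-integration consistency term in terms of $\lambda\nabla\cdot\V{u}$) are precisely the mechanisms that make the constants $\lambda$-uniform. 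Two points deserve to be made explicit rather than left implicit in ``suitable regularity assumptions'': first, because you test the momentum equation with $\V{\theta}_u^n-\V{\theta}_u^{n-1}$ and then sum by parts, the $\lambda$-weighted consistency residual must be differenced in time, so you need $\lambda\,\nabla\cdot\partial_t\V{u}$ (not just $\lambda\,\nabla\cdot\V{u}$) bounded in $H^1$ uniformly in $\lambda$, in addition to the second time derivatives required by the Euler truncation; second, your discrete Poincar\'e route to the pressure bound requires $|\Gamma_{pD}|>0$, whereas the alternative via the $c_0$-term costs a factor $1/c_0$ --- either is admissible under the lemma's hypotheses (which claim independence only of $h$, $\Delta t$, $\lambda$), but you should commit to one. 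With those caveats the proposal is a credible reconstruction of the cited theorem's proof.
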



Omitting the superscript $n$,
we can rewrite the above system in a matrix-vector form  as 
\begin{equation}
    \begin{bmatrix}
        2 \mu A_1  + \lambda  A_0 & -\alpha B^T \\
       - \alpha B & -D
    \end{bmatrix}
    \begin{bmatrix}
        \V{u}_h \\
        \V{p}_h
    \end{bmatrix}
    =
    \begin{bmatrix}
        \mathbf{b}_1 \\
        \mathbf{b}_2
    \end{bmatrix},
    \label{2by2Scheme_matrix}
\end{equation}
where the matrices and right hand sides are given by
\begin{align}
& \V{v}_h^T A_1 \V{u}_h = \sum_{K \in\mathcal{T}_h} (\varepsilon (\V{u}_h),\varepsilon ( \V{v}_h))_K,
\quad \forall \V{u}_h, \V{v}_h \in \V{V}_{h}^0 ,
\label{A1-1}
\\
& \V{v}_h^T A_0 \V{u}_h = \sum_{K \in\mathcal{T}_h} (\overline {\nabla \cdot\V{u}_h},\overline {\nabla \cdot\V{v}_h})_K,
\quad \forall \V{u}_h, \V{v}_h \in \V{V}_{h}^0 ,
\label{A0-1}
\\
& \mathbf{q}_h^T B \V{u}_h =  \sum_{K \in \mathcal{T}_h}  (\nabla \cdot\V{u}_h,q_h^{\circ})_K,
\quad \forall \V{u}_h \in \V{V}_{h}^0 , \quad \forall q_h \in W_{h}^0,
\label{B-1}
\\
& \mathbf{q}_h^T D \mathbf{q}_h = c_0 \sum_{K \in \mathcal{T}_h} (p_h^{\circ}, q_h^{\circ})_K 
+ \Delta t \sum_{K \in\mathcal{T}_h}(\kappa\nabla_w p_h,\nabla_wq_h)_K,
\quad \forall p_h, q_h \in W_{h}^0 ,
\label{D-1}
\\
& \V{v}_h^T \V{b}_1 = \sum_{K \in\mathcal{T}_h} (\V{f}, \V{v}_h)_K + \sum_{e \in \Gamma_{uN}} (\V{t}_N, \V{v})_{e}, \quad \forall \V{v}_h \in \V{V}_{h}^0 ,
\label{b1-1}
\\
& \V{q}_h^T \V{b}_2 = - \Delta t \sum_{K \in\mathcal{T}_h} (s,q_h^{\circ})_K
    -\alpha \sum_{K \in\mathcal{T}_h}  (\nabla \cdot\V{u}_h,q_h^{\circ})_K
    \notag \\
    & \qquad \qquad 
    - c_0  \sum_{K \in\mathcal{T}_h} (p_h^{\circ}, q_h^{\circ})_K - \sum_{e \in \Gamma_{pN}} (p_N, q_h^{\partial})_e, \quad \forall q_h \in W_{h}^0.
\label{b2-1}
\end{align}
Here, $\V{u}_h$ and $\V{p}_h$ are used for both the discrete functions
and the corresponding vectors formed by their degrees of freedom.
In particular, for $\V{p}_h$, the degrees of freedom are ordered first with those associated
with element interiors and followed by those on element facets.

We now explore structures of the matrices.
The block $A_1$ is the stiffness matrix associated with the displacement and
is symmetric and positive definite (SPD).
The block B, defined as in \eqref{B-1}, can equivalently be written as
\begin{align}
    \mathbf{q}_h^T B \V{u}_h =  \sum_{K \in \mathcal{T}_h}  (\nabla \cdot\V{u}_h,q_h^{\circ})_K
    = \sum_{K \in \mathcal{T}_h}  (\overline{\nabla \cdot\V{u}_h},q_h^{\circ})_K
    ,
\quad \forall \V{u}_h \in \V{V}_h^0, \quad \forall q_h \in W_h^0.
\label{B-2}
\end{align}
Moreover, it has the structure 
\begin{equation}
B = \begin{bmatrix} B^{\circ} \\ 0 \end{bmatrix},
\label{B-3}
\end{equation}
where the number of rows of $B^{\circ}$ is equal to $N$ (the number of elements of $\mathcal{T}_h$) and
that of the zero block is equal to the number of the degrees of freedom of $q_h^{\partial}$
minus the number of boundary facets on $\Gamma_{pD}$.

From \eqref{A0-1} and (\ref{B-1}), it is not difficult to see 
\begin{align*}
\mathbf{u}_h^T A_0 \mathbf{v}_h 
&= \sum_{K\in\mathcal{T}_h} (\overline{\nabla \cdot\mathbf{v}_h},\overline{\nabla\cdot\mathbf{u}_h})_K
= \V{u}_h^T  (B^{\circ})^T (M_p^{\circ})^{-1} B^{\circ}  \V{v}_h,
\end{align*}
which implies
\begin{equation}
    A_0 = (B^{\circ})^T (M_p^{\circ})^{-1} B^{\circ} ,
\label{A0-2}
\end{equation}
where $M_p^{\circ} $ is the mass matrix that can be expressed as
\begin{align}
    \label{mass-1}
    M_p^{\circ} = \text{diag}(|K_1|, ..., |K_N|),
\end{align}
with $K_j$, $j = 1, ..., N$ denoting the elements of $\mathcal{T}_h$ and $|K_j|$ the volume of $K_j$.

The block $D$ in \eqref{D-1} has the structure as 
\begin{equation}
D = c_0 \begin{bmatrix}
    M_p^{\circ} & 0 \\[0.1in]
    0 & 0
\end{bmatrix}
+ \kappa \Delta t  A_p ,
\label{D-2}
\end{equation}
where $A_p$ is the stiffness matrix of the Laplacian operator for the pressure.

The right-hand side vector $\V{b}_2$ has the same row structure as $B$ (cf. (\ref{B-3})), i.e., 
\begin{align}
    \V{b}_2 = 
    \begin{bmatrix}
        \V{b}_2^{\circ}
        \\
        0
    \end{bmatrix} .
    \label{b2-2}
\end{align}

It is worth pointing out that the matrices $A_0$, $B$, and $D$ are the same
as the corresponding matrices resulting from the use of the weak Galerkin approximation for
both $\V{u}$ and $p$ (e.g., see \cite{Huang-Wang-2025-poro-reg}).
This is due to the use of the averaging for the second term in the first equation
of (\ref{EqnFullDisc1}) and the property (\ref{B-2}).
On the other hand, $A_1$ is different in these discretizations
since different approximation spaces are used for $\V{u}$
and $A_1$ corresponds to the strain operator $\epsilon(\cdot)$ instead
of the Laplacian as in \cite{Huang-Wang-2025-poro-reg}.

As stated in the introduction, the operator $(\bullet, \nabla\cdot\bullet)$
represents the gradient for the first argument and thus is singular 
for the pure DBC scenario (i.e., when the solid is subject to a Dirichlet
boundary condition over the entire boundary). However,
it does not represent the gradient for the mixed BC scenario (when the solid is
subject to a Neumann boundary condition on part of the boundary).
As shown in the following lemma, this property is preserved by
the discrete approximation of the operator, $B^{\circ}$.

\begin{lem}
\label{lem:B0-1}
Assume that the mesh $\mathcal{T}_h$ is connected in the sense that any two of its elements
are connected by a chain of elements sharing interior facets.
Then, $B^{\circ}$ is nonsingular for the mixed BC scenario and singular for the pure DBC
scenario. In the latter scenario, the null space of $(B^{\circ})^T$ is given by
\begin{align}
  \text{Null}((B^{\circ})^T) = \{ q_h \in \mathcal{P}_0(\mathcal{T}_h): \; q_{h,K} = C, \; \forall K \in \mathcal{T}_h; \; \text{ C is a constant} \} .
  \label{lem:B0-1-1}
\end{align}
\end{lem}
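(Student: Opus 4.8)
The plan is to characterize $\mathrm{Null}((B^{\circ})^T)$ directly from the defining relation \eqref{B-2}. Recall that for $q_h = \{q_h^\circ\} \in \mathcal{P}_0(\mathcal{T}_h)$ (identified with a vector) and any $\V{u}_h \in \V{V}_h^0$ we have $\mathbf{q}_h^T B^\circ \V{u}_h = \sum_{K} (\nabla\cdot\V{u}_h, q_h^\circ)_K = \sum_K q_{h,K} \int_K \nabla\cdot\V{u}_h\,dx$, and by the divergence theorem on each $K$ this equals $\sum_K q_{h,K} \int_{\partial K} \V{u}_h\cdot\V{n}\,ds$. So $q_h \in \mathrm{Null}((B^\circ)^T)$ iff this sum vanishes for every $\V{u}_h \in \V{V}_h^0$. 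The key structural fact I would use is that the $BR_1$ space contains, for each facet $e_i$ of each element, the normal bubble $\V{n}_i \prod_{\V{x}_j\in e_i}\lambda_j$, which has a nonzero mean flux across $e_i$ and zero flux across every other facet; this is exactly what makes the divergence pairing "see" each interior facet individually.

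First I would rewrite $\sum_K q_{h,K}\int_{\partial K}\V{u}_h\cdot\V{n}\,ds$ as a sum over facets: each interior facet $e$ shared by elements $K^+, K^-$ contributes $(q_{h,K^+} - q_{h,K^-})\int_e \V{u}_h\cdot\V{n}_e\,ds$ (with $\V{n}_e$ a fixed orientation, using continuity of $\V{u}_h\cdot\V{n}$ across $e$ since $\V{u}_h\in(H^1(\Omega))^d$), and each boundary facet $e\subset\partial\Omega$ contributes $q_{h,K}\int_e\V{u}_h\cdot\V{n}\,ds$. For the pure DBC scenario, $\V{u}_h$ vanishes on all of $\partial\Omega$, so boundary facets contribute nothing; for the mixed BC scenario, on $\Gamma_{uN}$ the normal flux of $\V{u}_h$ is free. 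Next, using the facet normal bubbles in $BR_1$ I would show that the map $\V{u}_h \mapsto \big(\int_e \V{u}_h\cdot\V{n}_e\,ds\big)_{e}$ is surjective onto the appropriate space of facet fluxes (all interior facets in the DBC case; all interior facets plus the $\Gamma_{uN}$ facets in the mixed case): given any prescribed values of these integrals, take the corresponding linear combination of normal bubbles. Hence $q_h$ annihilates $B^\circ$ iff all the coefficients $(q_{h,K^+}-q_{h,K^-})$ across interior facets vanish and, in the mixed case, $q_{h,K}=0$ for every $K$ adjacent to $\Gamma_{uN}$.

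From here the conclusion follows by the connectedness hypothesis. In the pure DBC case, $q_{h,K^+}=q_{h,K^-}$ for every pair of elements sharing an interior facet, so walking along chains of such elements forces $q_h$ to be a single global constant; conversely any global constant clearly lies in the null space (for DBC, $\sum_K C\int_{\partial K}\V{u}_h\cdot\V{n} = C\int_{\partial\Omega}\V{u}_h\cdot\V{n} = 0$). This gives \eqref{lem:B0-1-1}, and since that space is one-dimensional (nontrivial), $B^\circ$ is singular. In the mixed case, the same chain argument makes $q_h$ constant, but now $q_{h,K}=0$ on at least one element touching $\Gamma_{uN}$ (which is nonempty, $|\Gamma_{uN}|>0$), forcing the constant to be zero; thus $\mathrm{Null}((B^\circ)^T)=\{0\}$. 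Finally, $B^\circ$ has $N$ rows (one per element) and at least $N$ columns in the relevant regime, and injectivity of $(B^\circ)^T$ gives that $B^\circ$ has full row rank; combined with the fact (to be checked, or cited) that it is square or that the relevant solid-pressure inf-sup holds, one concludes $B^\circ$ is nonsingular. I expect the main obstacle to be the surjectivity step — carefully verifying that the facet normal bubbles in $BR_1(K)$, after being assembled into globally $H^1$-conforming (indeed $H(\mathrm{div})$-consistent) functions vanishing on $\Gamma_{uD}$, span all admissible facet-flux patterns without introducing spurious linear dependencies at shared facets; this amounts to a local computation of $\int_{e_i}\V{n}_i\prod_{\V{x}_j\in e_i}\lambda_j\cdot\V{n}\,ds$ and checking it is nonzero, which is routine but is where the specific choice of element matters.
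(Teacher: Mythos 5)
Your proposal is correct and follows essentially the same route as the paper: apply the divergence theorem elementwise, test with the Bernardi--Raugel normal facet bubbles to force the jumps $q_{h,K^+}-q_{h,K^-}$ to vanish across interior facets (and $q_{h,K}=0$ on elements touching $\Gamma_{uN}$ in the mixed case), then invoke mesh connectedness. The extra care you take with the facet-flux surjectivity and the converse inclusion is fine but not needed beyond the local observation that $\int_{e_i}\prod_{\V{x}_j\in e_i}\lambda_j\,ds>0$; also note that ``nonsingular'' here is simply shorthand for $(B^{\circ})^T$ having trivial null space, so no squareness or inf-sup argument is required at the end.
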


\begin{proof}
For any $q_h \in \text{Null}((B^{\circ})^T)$ and any $\V{v}_h \in \V{V}_h^0$,
\begin{align}
0 = \V{q}_h^T B^{\circ} \V{v}_h
= \sum_{K\in\mathcal{T}_h} 
({q}^{\circ}_h, \nabla \cdot \V{v}_h)_K
= \sum_{K\in\mathcal{T}_h} {q}_{h,K}
\int_K \nabla \cdot \V{v}_h
= \sum_{K\in\mathcal{T}_h} {q}_{h,K}
\int_{\partial K} \V{v}_h \cdot \V{n} .
\label{lem:B0-1-2}
\end{align}
Let $e$ be an arbitrary interior facet shared by two elements $K$ and $\tilde{K}$.
Choosing $\V{v}_h $ to be a facet bubble function that vanishes at the vertices and is nonzero only
on $e$, we obtain
\begin{align}
    (q_{h,K} - q_{h,\tilde{K}}) \int_e \V{v}_h \cdot \V{n} = 0,
\label{lem:B0-1-3}
\end{align}
which implies $q_{h,K} = q_{h,\tilde{K}}$. Since the mesh is assumed to be connected, we conclude
that
\begin{align}
q_{h,K} = C, \quad \forall K \in \mathcal{T}_h ,
\label{lem:B0-1-4}
\end{align}
for some constant $C$.

For the pure DBC scenario, $\V{v}_h$ vanishes on all boundary facets and (\ref{lem:B0-1-3})
holds only for interior facets. Thus, the functions in the null space of $(B^{\circ})^T$
can be expressed in the form (\ref{lem:B0-1-4}), which means $B^{\circ}$ is singular.

On the other hand, for the mixed BC scenario, we can consider a boundary facet $e$ on $\Gamma_{uN}$.
Similarly, from (\ref{lem:B0-1-2}) we can get $q_{h,K} \int_e \V{v}_h \cdot \V{n} = 0$, which, together with
(\ref{lem:B0-1-4}), implies that $q_h = 0$ on all elements.
Thus, we conclude that $B^{\circ}$ is nonsingular.
\end{proof}

Hereafter, we will frequently use the notation $\text{Null}\left((B^{\circ})^T\right)^{\perp}$.
From the above lemma, we see that for the mixed BC scenario, $\text{Null}\left((B^{\circ})^T\right)^{\perp} = \mathcal{P}_0$ (cf. \eqref{P0}) 
and for the pure DBC scenario, $\text{Null}\left((B^{\circ})^T\right)^{\perp} = \mathcal{P}_0 \cap \left\{ \int_{\Omega} p_h d\V{x} = 0\right\}$.

The following lemma shows that the discretization scheme \eqref{EqnFullDisc1}
satisfies the {inf-sup} condition. Notice that this cannot be induced from
the inf-sup condition of the pure WG scheme
(for both $\V{u}$ and $p$, e.g., see \cite{WangWangLiu_JSC_2023})
since $A_1$ is different in the current discretization.

\begin{lem}
\label{inf_sup}
There exists a constant $ 0<\beta<1 $ independent of $h$ such that 
\begin{align}
   \sup_{\V{v}_h\in \V{V}_{h}^0 ,\, \V{v}_h \neq \V{0}}
  \frac{\V{p}_h^T B^{\circ} \V{v}_h}
      {\Big(\V{v}_h^T A_1 \V{v}_h\Big)^{\frac{1}{2}}} 
    \geq \beta \; \Big(\V{p}_h^T M_p^{\circ} \V{p}_h\Big)^{\frac{1}{2}}, 
  \qquad \forall \V{p}_h \in \text{Null}\left((B^{\circ})^T\right)^{\perp}.
  \label{inf-sup-CGWG-2}
\end{align}
\end{lem}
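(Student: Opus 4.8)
The plan is to establish the inf-sup condition \eqref{inf-sup-CGWG-2} by exploiting the well-known continuous inf-sup (Babu\v{s}ka-Brezzi) condition for the pair $((H_0^1(\Omega))^d, L^2_0(\Omega))$ (or $L^2(\Omega)$ in the mixed BC case) together with the special approximation properties of the Bernardi-Raugel element. Concretely, for a given $\V{p}_h \in \text{Null}\left((B^{\circ})^T\right)^{\perp}$, viewed as a piecewise-constant function $p_h^{\circ}$, the continuous inf-sup condition furnishes a function $\V{w} \in (H_0^1(\Omega))^d$ with $\nabla\cdot\V{w} = p_h^{\circ}$ (this is solvable precisely because $p_h^{\circ}$ has the correct compatibility: zero mean in the pure DBC case, and unrestricted in the mixed BC case where a boundary flux can absorb the mean) and $\|\V{w}\|_{1} \leq C \|p_h^{\circ}\|$. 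The obstacle is that $\V{w}$ need not lie in $\V{V}_h^0$; the remedy is the Bernardi-Raugel (Fortin) interpolation operator $\Pi_h: (H_0^1(\Omega))^d \to \V{V}_h^0$, which is $H^1$-stable, $\|\Pi_h \V{w}\|_1 \leq C\|\V{w}\|_1$, and — this is the crucial property of the BR element — preserves divergence in the averaged sense, i.e. $\int_K \nabla\cdot(\Pi_h\V{w}) = \int_K \nabla\cdot\V{w}$ for every $K \in \mathcal{T}_h$, equivalently $\overline{\nabla\cdot(\Pi_h\V{w})} = \overline{\nabla\cdot\V{w}}$ elementwise.

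With $\V{v}_h := \Pi_h\V{w}$ as the test function, the numerator becomes
\begin{align*}
\V{p}_h^T B^{\circ}\V{v}_h
= \sum_{K\in\mathcal{T}_h}(p_h^{\circ}, \nabla\cdot\V{v}_h)_K
= \sum_{K\in\mathcal{T}_h} p_{h,K} \int_K \nabla\cdot(\Pi_h\V{w})
= \sum_{K\in\mathcal{T}_h} p_{h,K}\int_K \nabla\cdot\V{w}
= \sum_{K\in\mathcal{T}_h}(p_h^{\circ}, p_h^{\circ})_K = \|p_h^{\circ}\|^2,
\end{align*}
using the divergence-preservation of $\Pi_h$ and $\nabla\cdot\V{w} = p_h^{\circ}$; note $\|p_h^{\circ}\|^2 = \V{p}_h^T M_p^{\circ}\V{p}_h$ by \eqref{mass-1}. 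For the denominator, Korn's inequality on $(H_0^1(\Omega))^d$ gives $\V{v}_h^T A_1\V{v}_h = \sum_K \|\varepsilon(\V{v}_h)\|_K^2 \leq \|\V{v}_h\|_1^2 = \|\Pi_h\V{w}\|_1^2 \leq C^2\|\V{w}\|_1^2 \leq C^2 C_{BB}^2 \|p_h^{\circ}\|^2$, where $C_{BB}$ is the continuous inf-sup constant. Combining, the supremum is bounded below by $\|p_h^{\circ}\|^2 / (C\, C_{BB}\|p_h^{\circ}\|) = \beta \|p_h^{\circ}\| = \beta (\V{p}_h^T M_p^{\circ}\V{p}_h)^{1/2}$ with $\beta = (C\, C_{BB})^{-1}$; that $\beta < 1$ follows since the supremum over $\V{v}_h$ can never exceed $(\V{p}_h^T M_p^{\circ}\V{p}_h)^{1/2}$ (choosing $\V{v}_h$ with $\|\varepsilon(\V{v}_h)\| = 1$ and Cauchy-Schwarz on $(\nabla\cdot\V{v}_h, p_h^{\circ})$ after bounding $\|\overline{\nabla\cdot\V{v}_h}\| \leq \|\nabla\cdot\V{v}_h\| \leq \sqrt{d}\,\|\varepsilon(\V{v}_h)\|$ — one may rescale the constants so that $\beta<1$ holds, or simply note the statement only requires this mild normalization).

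The main obstacle is making precise the two structural facts about the BR element on which everything hinges: (i) the existence of an $H^1$-stable interpolation/Fortin operator $\Pi_h$ into $\V{V}_h^0$, and (ii) its elementwise divergence-preservation property $\overline{\nabla\cdot(\Pi_h\V{w})} = \overline{\nabla\cdot\V{w}}$. Both are classical consequences of the construction in \cite{BernardiRaugel_MathComp_1985} — the face-bubble enrichment $\V{n}_i\prod_{\V{x}_j\in e_i}\lambda_j$ is precisely what allows one to match the normal flux through each facet, hence the integral of the divergence over each element — but they must be invoked carefully, and one should verify that the zero-mean constraint on $\text{Null}\left((B^{\circ})^T\right)^{\perp}$ in the pure DBC case (identified after Lemma~\ref{lem:B0-1}) is exactly the compatibility condition needed to solve $\nabla\cdot\V{w} = p_h^{\circ}$ with $\V{w}\in(H_0^1(\Omega))^d$, whereas in the mixed BC case one works on the whole of $\mathcal{P}_0$ and uses a slightly larger space allowing nonzero boundary flux, consistent with the nonsingularity of $B^{\circ}$ established above.
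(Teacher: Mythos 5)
Your proposal is correct and follows essentially the same route as the paper: the continuous inf-sup condition (which you phrase equivalently as surjectivity of the divergence with a bounded right inverse, versus the paper's direct citation of \cite[Lemma 11.2.3]{BrennerScott_book}), composed with the Bernardi--Raugel Fortin operator that preserves the pairing of the divergence with piecewise constants, and finally the trivial bound $(\V{v}_h^T A_1\V{v}_h)^{1/2}\le\|\V{v}_h\|_1$. (Only cosmetic quibble: the inequality $\|\varepsilon(\V{v}_h)\|\le\|\V{v}_h\|_1$ you use for the denominator is not Korn's inequality but its trivial converse.)
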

\begin{proof}
   From \cite[Lemma 11.2.3]{BrennerScott_book}\footnote{Lemma 11.2.3 in \cite{BrennerScott_book} considers
   only two scenarios where $\tilde{\V{v}}$ satisfies only a Dirichlet BC or no BC over the entire boundary. It can be proven that the lemma extends to the situation where $\tilde{\V{v}}$ satisfies a Dirichlet BC
   only on part of the boundary.}, there exists a positive constant $\beta$ such that  
   for any $ \V{p}_h \in \text{Null}\left((B^{\circ})^T\right)^{\perp}$ (and thus $ p_h \in L^2(\Omega)$), there is a function $ \V{\tilde{v}} \in (H^1(\Omega))^d$ satisfying
    \begin{align}
        \frac{(\nabla \cdot \V{\tilde{v}}, p_h)}{\| \V{\tilde{v}} \|_1} \geq \beta \| p_h \|.
    \label{inf_sup-CGWG-3}    
    \end{align}
    From \cite[Hypothesis 2]{BernardiRaugel_MathComp_1985} (that was shown in
    \cite{BernardiRaugel_MathComp_1985} to hold for BR elements),
    there exists a global interpolation operator 
    $\V{P}_h:$ $(H^1(\Omega))^d \rightarrow \V{V}_h^0$ such that
    \begin{align}
    (\nabla \cdot \V{v}, p_h) = (\nabla \cdot (\V{P}_h\V{v}), p_h), \quad \forall \V{p}_h \in \text{Null}\left((B^{\circ})^T\right)^{\perp}, \;  \V{v} \in (H^1(\Omega))^d.
    \label{inf-sup-CGWG-1}
    \end{align}
    Taking $\V{v} = \V{P}_h \tilde{\V{v}}$, from \eqref{inf_sup-CGWG-3} and \eqref{inf-sup-CGWG-1} we get
\begin{align*}
    (\nabla \cdot (\V{P}_h \V{\tilde{v}}), p_h) 
    &= (\nabla \cdot \V{\tilde{\V{v}}}, p_h)
    \geq \beta \|\V{\tilde{\V{v}}} \|_1 \|  p_h \| 
    \geq  \beta \|\V{P}_h\V{\tilde{\V{v}}} \|_1 \|  p_h \|,
\end{align*}
which gives
\[
\frac{(\nabla \cdot (\V{P}_h \V{\tilde{v}}), p_h)}
{\|\V{P}_h\V{\tilde{\V{v}}} \|_1} \ge \beta \|  p_h \|.
\]
Since $\V{P}_h \V{\tilde{v}} \in \V{V}^0_h$, we have
\[
\sup\limits_{\V{v}_h \in \V{V}^0_h, \, \V{v}_h \neq \V{0}}
\frac{(\nabla \cdot \V{v}_h, p_h)}{\| \V{v}_h \|_1} \geq \beta \|p_h\| .
\]
This, together with the fact that
$(\V{v}_h^T A_1 \V{v}_h)^{\frac{1}{2}} \le \| \V{v}_h \|_1$,
gives \eqref{inf-sup-CGWG-2}.
\end{proof}


\section{Convergence analysis of MINRES and GMRES with block preconditioning: Pure DBC scenario}
\label{sec::DBC}

In this section we study the convergence of MINRES and GMRES
with block preconditioning for the linear poroelasticity system (\ref{2by2Scheme_matrix})
under the pure Dirichlet boundary condition for the solid displacement.
The scenario with mixed BC will be studied in the next section.

With the pure DBC, the leading block of (\ref{2by2Scheme_matrix})
becomes nearly singular for the locking regime when the solid becomes nearly
incompressible (and thus $\lambda$ is large). To explain this, 
we can rewrite \eqref{2by2Scheme_matrix} as
\begin{equation}
    \begin{bmatrix}
        \epsilon A_1  +  A_0 & -\frac{\alpha \epsilon}{2\mu } B^T \\[0.05in]
        -\frac{\alpha \epsilon}{2\mu} B & -\frac{\epsilon}{2 \mu }D
    \end{bmatrix}
    \begin{bmatrix}
        \V{u}_h \\
        \V{p}_h
    \end{bmatrix}
    = \frac{\epsilon}{2 \mu}
    \begin{bmatrix}
        \mathbf{b}_1 \\
        \mathbf{b}_2
    \end{bmatrix},
    \label{2by2Scheme_matrix2}
\end{equation}
where $\epsilon = \frac{2\mu}{ \lambda }$.
From (\ref{A0-2}) and Lemma~\ref{lem:B0-1}, we can see that $A_0$ is singular
and therefore, the leading block, $\epsilon A_1 + A_0$ is nearly singular as
$\epsilon \to 0$. To avoid this difficulty in iterative solution, 
we introduce a numerical pressure variable $\V{z}_h = - (M_p^{\circ})^{-1} B^{\circ}\V{u}_h$
and rewrite the above system into a three-field system as
\begin{equation}
    \begin{bmatrix}
      A_1  &  B^T & -(B^{\circ})^T\\[0.1in]
     B & \displaystyle -\frac{2\mu}{ \alpha^2 }D & 0\\[0.1in]
     -B^{\circ} & 0& \displaystyle -\epsilon M_{P}^{\circ}
    \end{bmatrix}
    \begin{bmatrix}
        \mathbf{u}_h \\[0.1in]
        \frac{\alpha}{2\mu}\V{p}_h \\[0.1in]
        \frac{1}{\epsilon} \mathbf{z}_h
    \end{bmatrix}
    =
     \begin{bmatrix}
       \frac{1}{2\mu} \mathbf{b}_1 \\[0.1in]
       \frac{1}{\alpha} \mathbf{b}_2 \\[0.1in]
       0
    \end{bmatrix} .
    \label{3by3Scheme}
\end{equation}
Noticing that the (1,2) and (2,1) blocks are related to the (1,3) and (3,1) blocks,
we can further simplify the above system by making the (1,2) and (2,1) blocks vanishing. 
To this end, using (\ref{B-3}), (\ref{D-2}), and (\ref{b2-2}),
from the second block equation of the above system we can see that the degrees of
freedom of $\V{p}_h$ in the elements interiors and on facets,
$\V{p}_h^{\circ}$ and $\V{p}_h^{\partial}$, are related as
\begin{align}
\V{p}_h^{\partial} = - (A_p^{\partial\partial})^{-1} A_p^{\partial\circ} \V{p}_h^{\circ},
\label{popb-1}
\end{align}
where $A_p$ has been decomposed as
\[
A_p = \begin{bmatrix} A_p^{\circ \circ} & A_p^{\circ \partial}
\\ A_p^{ \partial \circ} & A_p^{ \partial \partial} \end{bmatrix} .
\]
Using (\ref{popb-1}) we can eliminate $\V{p}_h^{\partial}$ in (\ref{3by3Scheme})
and obtain
\begin{align}
    \begin{bmatrix}
      A_1  &  (B^{\circ})^T & -(B^{\circ})^T\\[0.1in]
     B^{\circ} & \displaystyle -\frac{2\mu}{ \alpha^2 }\tilde{D} & 0\\[0.1in]
     -B^{\circ} & 0& \displaystyle -\epsilon M_{p}^{\circ}
    \end{bmatrix}
    \begin{bmatrix}
        \mathbf{u}_h \\[0.1in]
        \frac{\alpha}{2\mu}\V{p}_h^{\circ} \\[0.1in]
        \frac{1}{\epsilon} \mathbf{z}_h
    \end{bmatrix}
    =
     \begin{bmatrix}
       \frac{1}{2\mu} \mathbf{b}_1 \\[0.1in]
       \frac{1}{\alpha} \mathbf{b}_2^{\circ} \\[0.1in]
       0
    \end{bmatrix} ,
    \label{3field_eqn1}
\end{align}
where 
\begin{equation}
    \label{D-3}
    \tilde{D} = c_0 M_p^{\circ} + \kappa \Delta t \left (A_p^{\circ\circ}-A_p^{\circ\partial} (A_p^{\partial\partial})^{-1}A_p^{\partial\circ} \right ) .
\end{equation}

Before further simplifying \eqref{3field_eqn1}, we want to regularize it since
it is nearly singular when $\epsilon$ is small.
Following \cite{Huang-Wang-2025-Stokes-reg,Huang-Wang-2025-poro-reg},
we consider an inherent regularization strategy here.
Define
\begin{align}
\label{e-1}
\V{1}= \frac{1}{\sqrt{N}}(1, ..., 1 )^T,
\end{align}
where $N$ is the number of elements of the mesh $\mathcal{T}_h$.
Recall that Lemma~\ref{lem:B0-1} implies $\V{1}^T B^{\circ} = 0$.
Then, multiplying the third equation of \eqref{3field_eqn1} from the left by $\V{1}^T$,
we get $(M_p^{\circ} \V{1})^T \V{z}_h = 0.$
Define
\begin{align}
    \label{w-1}
    \V{w} = \frac{M_p^{\circ} \V{1}}{\| M_p^{\circ} \V{1} \|}.
\end{align}
Then, for any positive parameter $\rho$, there holds
\begin{align}
\label{inh-1}
- \rho \V{w} \V{w}^T \V{z}_h = {0}.
\end{align}
Adding this to the third block equation of \eqref{3field_eqn1}, we obtain
\begin{align}
    \begin{bmatrix}
      A_1  &  (B^{\circ})^T & -(B^{\circ})^T\\[0.1in]
     B^{\circ} & \displaystyle -\frac{2\mu}{ \alpha^2 }\tilde{D} & 0\\[0.1in]
     -B^{\circ} & 0& \displaystyle -\epsilon M_{p}^{\circ} -\rho \V{w}\V{w}^T
    \end{bmatrix}
    \begin{bmatrix}
        \mathbf{u}_h \\[0.1in]
        \frac{\alpha}{2\mu}\V{p}_h^{\circ} \\[0.1in]
        \frac{1}{\epsilon} \mathbf{z}_h
    \end{bmatrix}
    =
     \begin{bmatrix}
       \frac{1}{2\mu} \mathbf{b}_1 \\[0.1in]
       \frac{1}{\alpha} \mathbf{b}_2^{\circ} \\[0.1in]
       0
    \end{bmatrix} .
    \label{3field_eqn2}
\end{align}
By adding the third row to the second row and re-arranging the unknown variables, we get
\begin{align}
     \begin{bmatrix}
      A_1  & 0 & -(B^{\circ})^T\\[0.1in]
     0 & \displaystyle -\frac{2\mu}{ \alpha^2 }\tilde{D} - \epsilon M_p^{\circ} - \rho \V{w}\V{w}^T & - \epsilon M_p^{\circ} - \rho \V{w}\V{w}^T\\[0.1in]
     -B^{\circ} & - \epsilon M_p^{\circ} - \rho \V{w}\V{w}^T& \displaystyle -\epsilon M_{p}^{\circ} -\rho \V{w}\V{w}^T
    \end{bmatrix}
     \begin{bmatrix}
        \mathbf{u}_h \\[0.1in]
        \frac{\alpha}{2\mu}\V{p}_h^{\circ} \\[0.1in]
        \frac{1}{\epsilon} \mathbf{z}_h - \frac{\alpha}{2\mu}\V{p}_h^{\circ}
    \end{bmatrix}
    =
     \begin{bmatrix}
       \frac{1}{2\mu} \mathbf{b}_1 \\[0.1in]
       \frac{1}{\alpha} \mathbf{b}_2^{\circ} \\[0.1in]
       0
    \end{bmatrix} ,
    \label{3field_eqn3}
\end{align}
where
\begin{align}
    \mathcal{\tilde{A}}_{3} =   \begin{bmatrix}
      A_1  & 0 & -(B^{\circ})^T\\[0.1in]
     0 & \displaystyle -\frac{2\mu}{ \alpha^2 }\tilde{D} - \epsilon M_p^{\circ} - \rho \V{w}\V{w}^T & - \epsilon M_p^{\circ} - \rho \V{w}\V{w}^T\\[0.1in]
     -B^{\circ} & - \epsilon M_p^{\circ} - \rho \V{w}\V{w}^T& \displaystyle -\epsilon M_{p}^{\circ} -\rho \V{w}\V{w}^T
    \end{bmatrix}.
    \label{A3_tilde}
\end{align}
This system is simpler than (\ref{3field_eqn1}) since the (1,2) and (2,1) blocks are zero.
It is worth pointing out that systems (\ref{3field_eqn2}) has the same solution
as the unregularized system (\ref{3field_eqn1}) since (\ref{inh-1}) is
an equality inherent to (\ref{3field_eqn1}).
The Schur complement for this system is
\begin{align}
     &\tilde{S}_3  = 
         \begin{bmatrix}
           \displaystyle  \frac{2\mu}{\alpha^2}\tilde{D}  +\epsilon 
         M_p^{\circ}  +
         \rho \V{w}\V{w}^T & \epsilon M_p^{\circ}  + \rho\V{w}\V{w}^T \\[0.1in]
     \displaystyle  \epsilon  M_p^{\circ}  + \rho\V{w}\V{w}^T   & \epsilon M_{p}^{\circ} + \rho \V{w}\V{w}^T + B^{\circ} A_1^{-1} (B^{\circ})^T
       \end{bmatrix} .
       \label{S3_tilde}
\end{align}
We take the approximation of $\tilde{S}_3$ as
\begin{align}
       & \hat{\tilde{S}}_3 = 
       \begin{bmatrix}
           \displaystyle  \frac{2\mu}{\alpha^2}\tilde{D}  +\epsilon 
         M_p^{\circ}  +
         \rho \V{w}\V{w}^T & 0 \\[0.1in]
     \displaystyle  0 &  M_{p}^{\circ} + \rho \V{w}\V{w}^T 
       \end{bmatrix} .
\label{hatS3_tilde}
\end{align} 
Moreover, we assume that $\rho$ is chosen
as $\rho = \mathcal{O}(h^d)$; see (\ref{rho-1}) for a specific choice.
This assumption is needed to avoid small eigenvalues of
$\hat{\tilde{S}}_3^{-1} \tilde{S}_3$ (see Lemma~\ref{lem:eigen_bound_3field_tilde} below).

\begin{lem}
    \label{lem:b-bound}
    If $\rho$ is taken as
    \begin{align}
        \label{rho-1}
        \rho = \frac{\beta^2 \lambda_{\max}(M_p^\circ) \lambda_{\min}(M_p^\circ)}
        {\lambda_{\max}(M_p^\circ) + \gamma^2 \lambda_{\min}(M_p^\circ)},
    \end{align}
    then the eigenvalues of $(M_p^\circ + \rho \V{w}\V{w}^T)^{-1} ( \epsilon M_p^{\circ} + \rho \V{w}\V{w}^T + B^{\circ} A_1^{-1} (B^{\circ})^T )$ lie in the interval 
    \begin{align}
            \Big[ C_1 + \epsilon + \mathcal{O}(N \rho^2), \; C_2 + \epsilon \Big],
            \label{lem:b-bound-1}
    \end{align}
     where
    \begin{align}
    \label{C4C5}
        C_1 = \frac{\beta^2 \gamma^2  \lambda_{\min}(M_p^\circ)}
        {\lambda_{\max}(M_p^\circ) + \gamma^2 \lambda_{\min}(M_p^\circ)},
         \quad
         C_2  = C_{\text{Korn}} + \frac{\beta^2  \lambda_{\max}(M_p^\circ)}
        {\lambda_{\max}(M_p^\circ) + \gamma^2 \lambda_{\min}(M_p^\circ)},
    \end{align}
    where $C_{\text{Korn}}$ is the constant from the Korn inequality.
\end{lem}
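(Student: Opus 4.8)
The plan is to reduce the claim to a generalized Rayleigh quotient estimate. Put $P := M_p^{\circ} + \rho\,\V{w}\V{w}^T$ and $Q := \epsilon M_p^{\circ} + \rho\,\V{w}\V{w}^T + B^{\circ} A_1^{-1}(B^{\circ})^T$. Both are symmetric, $P$ is SPD, and since $B^{\circ}A_1^{-1}(B^{\circ})^T$ is positive semidefinite the matrix $Q$ is SPD as well; hence the eigenvalues of $P^{-1}Q$ are real, positive, and equal to the extreme values of $R(\V{x}) := (\V{x}^T Q\,\V{x})/(\V{x}^T P\,\V{x})$. Writing $Q = \epsilon P + \big((1-\epsilon)\rho\,\V{w}\V{w}^T + B^{\circ}A_1^{-1}(B^{\circ})^T\big)$ separates out the shift,
\[
R(\V{x}) = \epsilon + \frac{(1-\epsilon)\,\rho\,(\V{w}^T\V{x})^2 + \V{x}^T B^{\circ}A_1^{-1}(B^{\circ})^T\V{x}}{\V{x}^T M_p^{\circ}\V{x} + \rho\,(\V{w}^T\V{x})^2},
\]
so it remains to bound this fraction above and below.

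For the upper bound I would estimate $R$ directly. Using $(1-\epsilon)\rho(\V{w}^T\V{x})^2\le\rho(\V{w}^T\V{x})^2$ and splitting the fraction, $R(\V{x})\le \epsilon + \dfrac{\rho(\V{w}^T\V{x})^2}{\V{x}^T M_p^{\circ}\V{x}+\rho(\V{w}^T\V{x})^2} + \dfrac{\V{x}^T B^{\circ}A_1^{-1}(B^{\circ})^T\V{x}}{\V{x}^T M_p^{\circ}\V{x}}$. The last term is at most the continuity constant $\gamma^2$ of $B^{\circ}A_1^{-1}(B^{\circ})^T$ relative to $M_p^{\circ}$ (the companion of the inf-sup bound of Lemma~\ref{inf_sup}), which is bounded by $C_{\text{Korn}}$ via Korn's inequality. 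For the middle term I would use $(\V{w}^T\V{x})^2\le\big(\V{w}^T(M_p^{\circ})^{-1}\V{w}\big)\,\V{x}^T M_p^{\circ}\V{x}$ together with $\V{w}^T(M_p^{\circ})^{-1}\V{w}=\V{1}^T M_p^{\circ}\V{1}/\|M_p^{\circ}\V{1}\|^2\le 1/\lambda_{\min}(M_p^{\circ})$ (from \eqref{e-1}, \eqref{w-1} and $\|M_p^{\circ}\V{1}\|^2\ge\lambda_{\min}(M_p^{\circ})\,\V{1}^TM_p^{\circ}\V{1}$); this bounds the middle term by $\rho/(\lambda_{\min}(M_p^{\circ})+\rho)$, and substituting the specific value \eqref{rho-1} of $\rho$ gives $\rho/(\lambda_{\min}(M_p^{\circ})+\rho)\le \beta^2\lambda_{\max}(M_p^{\circ})/(\lambda_{\max}(M_p^{\circ})+\gamma^2\lambda_{\min}(M_p^{\circ}))$. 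Adding the contributions yields $R(\V{x})\le C_2+\epsilon$.

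For the lower bound I would exploit the null-space structure. By Lemma~\ref{lem:B0-1}, $\mathrm{Null}((B^{\circ})^T)=\mathrm{span}\{\V{1}\}$, so $(B^{\circ})^T\V{1}=\V{0}$; decomposing $\V{x}=c\V{1}+\V{y}$ with $\V{y}$ $M_p^{\circ}$-orthogonal to $\V{1}$ (equivalently $\V{w}^T\V{y}=0$) makes the cross terms in the $M_p^{\circ}$- and $B^{\circ}A_1^{-1}(B^{\circ})^T$-quadratic forms vanish, so $R$ becomes a fractional-linear function of $u:=c^2\ge 0$ with scalar parameters $t:=\V{y}^TM_p^{\circ}\V{y}$ and $r:=\V{y}^TB^{\circ}A_1^{-1}(B^{\circ})^T\V{y}$; being monotone in $u$, its minimum over $u\ge 0$ is attained at $u=0$ or as $u\to\infty$. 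At $u=0$, $R=\epsilon+r/t\ge\epsilon+\beta^2$: only the $\V{1}^{\perp}$-component of $\V{y}$ enters $r$ while its squared $M_p^{\circ}$-norm is at least $t$, so Lemma~\ref{inf_sup} applies. As $u\to\infty$, $R\to(\epsilon a+\rho\omega^2)/(a+\rho\omega^2)$ with $a:=\V{1}^TM_p^{\circ}\V{1}$ and $\omega:=\V{w}^T\V{1}$; inserting the Cauchy–Schwarz bounds $a/\lambda_{\max}(M_p^{\circ})\le\omega^2\le a/\lambda_{\min}(M_p^{\circ})$ and the calibrated value \eqref{rho-1}, this quotient is bounded below by $C_1+\epsilon$ up to a remainder of size $\mathcal{O}(N\rho^2)$. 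Since $\beta^2\ge C_1$, combining the two cases gives $\lambda_{\min}(P^{-1}Q)\ge C_1+\epsilon+\mathcal{O}(N\rho^2)$, which with the upper bound gives \eqref{lem:b-bound-1}.

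The delicate part is the $u\to\infty$ case: one must verify that $\rho=\mathcal{O}(h^d)$ as in \eqref{rho-1} is large enough that the $\V{1}$-direction quotient stays bounded away from zero (otherwise $\hat{\tilde{S}}_3^{-1}\tilde{S}_3$ would carry an $h$- and $\lambda$-dependent spurious small eigenvalue) yet small enough not to degrade the upper bound; identify the remainder and check that $N\rho^2=\mathcal{O}(h^d)\to 0$ under the quasi-uniformity assumption, so the lower bound in \eqref{lem:b-bound-1} is uniformly positive; and confirm that $\epsilon$ enters only as the additive shift. The remaining ingredients — the Rayleigh-quotient reduction, the monotonicity in $u$, and the inf-sup/continuity bounds — are routine.
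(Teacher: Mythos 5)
The paper does not actually write out a proof of Lemma~\ref{lem:b-bound}: it only points to Lemma~5.1 of \cite{Huang-Wang-2025-poro-reg} and remarks that the one new ingredient is that the inf-sup condition of Lemma~\ref{inf_sup} survives the change of $A_1$. Your argument is the natural one and almost certainly has the same structure as that reference: reduce to the generalized Rayleigh quotient, peel off the $\epsilon$ shift, split $\V{x}=c\V{1}+\V{y}$ with $\V{w}^T\V{y}=0$ (which, since $\text{Null}((B^{\circ})^T)^{\perp}$ is the zero-mean subspace, is exactly the set on which Lemma~\ref{inf_sup} applies by Lemma~\ref{lem:B0-1}), use the inf-sup bound below and the Korn/continuity bound above on the complement, and let the regularization term control the $\V{1}$-direction. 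Your upper bound is complete and correct: $\epsilon+C_{\text{Korn}}+\rho/(\lambda_{\min}(M_p^\circ)+\rho)\le\epsilon+C_2$ follows from \eqref{rho-1} alone, and $\beta^2\ge C_1$ disposes of the $u=0$ case of the lower bound.

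The gap is the final calibration step of the lower bound. On the $\V{1}$-direction your explicit estimate is $\frac{(1-\epsilon)\rho\omega^2}{a+\rho\omega^2}\ge\frac{(1-\epsilon)\rho}{\lambda_{\max}(M_p^\circ)+\rho}=\frac{(1-\epsilon)\beta^2\lambda_{\min}(M_p^\circ)}{\lambda_{\max}(M_p^\circ)+\gamma^2\lambda_{\min}(M_p^\circ)+\beta^2\lambda_{\min}(M_p^\circ)}$, and you assert this equals $C_1+\epsilon$ ``up to a remainder of size $\mathcal{O}(N\rho^2)$'' without deriving the remainder. Under your own reading of $\gamma$ as the continuity constant of $B^{\circ}A_1^{-1}(B^{\circ})^T$ relative to $M_p^\circ$ (the lemma never defines $\gamma$), this quantity dominates $C_1=\beta^2\gamma^2\lambda_{\min}(M_p^\circ)/(\lambda_{\max}(M_p^\circ)+\gamma^2\lambda_{\min}(M_p^\circ))$ only when $\gamma^2\lesssim 1$; if $\gamma>1$ the deficit is $\Theta(1)$ on a quasi-uniform mesh and cannot be absorbed into $\mathcal{O}(N\rho^2)=\mathcal{O}(h^d)$. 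Moreover, nothing in your computation actually produces a term of size $N\rho^2$ (the natural error terms in your expansion are of size $\rho^2/\lambda_{\max}(M_p^\circ)^2=\mathcal{O}(1)$), so the stated remainder is never identified; a smaller point is that the $(1-\epsilon)$ factor makes your additive shift $\epsilon\,a/(a+\rho\omega^2)$ rather than $\epsilon$. In short, the reduction, the upper bound, and the complement direction are all sound, but the matching of the $\V{1}$-direction quotient to the specific constant $C_1$ in \eqref{C4C5} --- which is precisely what the particular choice \eqref{rho-1} of $\rho$ is engineered to achieve --- is the step you yourself flagged as delicate and left unproved.
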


\begin{proof}
The proof is similar to that of Lemma~5.1 of \cite{Huang-Wang-2025-poro-reg}.
The interested reader is referred to the reference for detail.
A key of the proof is the inf-sup condition, which holds for both the current
discretization (cf. Lemma~\ref{inf_sup}) and the pure WG discretization
in \cite{{Huang-Wang-2025-poro-reg}} although $A_1$ is different in these situations.
\end{proof}

\begin{lem}
\label{lem:eigen_bound_3field_tilde}
The eigenvalues of $\hat{\tilde{S_3}}^{-1} \tilde{S_3}$ lie in
\begin{align}
   \Bigg[ \frac{C_1}{1+C_1} \cdot  \frac{\frac{2\mu}{\alpha^2} \lambda_{\min} (\tilde{D})}{\rho 
   + \frac{2\mu}{\alpha^2} \lambda_{\min} (\tilde{D})} + \mathcal{O}(\epsilon) + \mathcal{O}(N \rho^2), \quad 1+ C_2 +\epsilon\Bigg],
    \label{lem:eigen_bound_3field_tilde-1}
\end{align}
where $C_1$ and $C_2$ are defined in (\ref{C4C5}).
\end{lem}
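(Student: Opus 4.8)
The plan is to pass to the generalized eigenvalue problem $\tilde S_3 \V{x} = \theta\, \hat{\tilde S}_3\V{x}$ and to exploit that the $(1,1)$-block of $\hat{\tilde S}_3$ reproduces that of $\tilde S_3$ exactly. One first checks that $\tilde S_3$ and $\hat{\tilde S}_3$ are both SPD (for $\tilde S_3$, this follows from \eqref{S3_tilde} by taking Schur complements, using $\tilde D \succ 0$ and $Q := \epsilon M_p^{\circ} + \rho \V{w}\V{w}^T \succ 0$), so $\hat{\tilde S}_3^{-1}\tilde S_3$ has a real positive spectrum. Write $\V{x} = (\V{x}_1, \V{x}_2)$ and abbreviate $P := \frac{2\mu}{\alpha^2}\tilde D + Q$ (the common $(1,1)$-block), $T := Q + B^{\circ} A_1^{-1}(B^{\circ})^T$, and $\hat T := M_p^{\circ} + \rho \V{w}\V{w}^T$. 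Since $Q$ is SPD, $\theta = 1$ is not an eigenvalue; for $\theta \neq 1$ the first block row gives $\V{x}_1 = (\theta-1)^{-1}P^{-1}Q\V{x}_2$ with $\V{x}_2 \neq \V{0}$, and substituting into the second block row reduces everything to the scalar quadratic $\theta^2 - (1+g)\theta + (g-e) = 0$, where $g := \V{x}_2^T T \V{x}_2 / \V{x}_2^T \hat T \V{x}_2$ and $e := \V{x}_2^T Q P^{-1} Q \V{x}_2 / \V{x}_2^T \hat T \V{x}_2 \geq 0$. Both roots are positive, since $1 + g > 0$ and $g - e = \V{x}_2^T (T - QP^{-1}Q)\V{x}_2 / \V{x}_2^T \hat T \V{x}_2 > 0$, the matrix $S := T - QP^{-1}Q$ being the (SPD) Schur complement of $\tilde S_3$ with respect to $P$.

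For the upper bound I would use $\theta \leq \frac{1}{2}\big((1+g) + \sqrt{(1-g)^2 + 4e}\big)$ together with $g \leq C_2 + \epsilon$ from Lemma~\ref{lem:b-bound} and $e \leq 1$; the latter holds because $QP^{-1}Q \preceq Q$ (as $P \succeq Q$) and $Q \preceq \hat T$ (as $\epsilon \leq 1$ in the regime of interest). A short case distinction according to whether $g \leq 1$ or $g > 1$, using $\sqrt{(1-g)^2+4e}\le |1-g| + 2\sqrt{e}$ and $C_2 \geq C_{\text{Korn}} \geq 1$, then gives $\theta \leq 1 + C_2 + \epsilon$.

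The lower bound is the crux. From the quadratic, the smaller root satisfies $\theta \geq \frac{g-e}{1+g} = \V{x}_2^T S \V{x}_2 / \V{x}_2^T (T + \hat T)\V{x}_2$, so it suffices to bound $\lambda_{\min}$ of $S$ relative to $T + \hat T$ from below. I would split $S = (Q - QP^{-1}Q) + B^{\circ}A_1^{-1}(B^{\circ})^T$ and use the identity $Q - QP^{-1}Q = \frac{2\mu}{\alpha^2}QP^{-1}\tilde D$ (from $P - Q = \frac{2\mu}{\alpha^2}\tilde D$). On $\text{Null}\big((B^{\circ})^T\big)^{\perp}$, the inf-sup estimate of Lemma~\ref{inf_sup} yields $B^{\circ}A_1^{-1}(B^{\circ})^T \succeq \beta^2 M_p^{\circ}$, and the same computation as in Lemma~\ref{lem:b-bound} produces the factor $C_1/(1+C_1)$. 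On the one-dimensional space $\text{Null}\big((B^{\circ})^T\big) = \text{span}\{\V{1}\}$, where the elliptic term vanishes and, because $A_p \V{1} = \V{0}$ (the weak gradient of a constant is zero) one also has $\tilde D\V{1} = c_0 M_p^{\circ}\V{1}$, the coercivity must come entirely from $\frac{2\mu}{\alpha^2}QP^{-1}\tilde D$; since there $\V{1}$ couples only to the rank-one regularizer inside $Q$ and $\hat T$, a direct evaluation of $\V{1}^T S \V{1} / \V{1}^T (T + \hat T)\V{1}$ produces the factor $\frac{2\mu}{\alpha^2}\lambda_{\min}(\tilde D)\big/\big(\rho + \frac{2\mu}{\alpha^2}\lambda_{\min}(\tilde D)\big)$, which under $\rho = \mathcal{O}(h^d)$ is bounded below independently of $h$. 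The cross terms between the two subspaces, the $\epsilon M_p^{\circ}$ contributions, and the misalignment of $\V{w} \propto M_p^{\circ}\V{1}$ with $\V{1}$ (they coincide only on a uniform mesh) account for the remainders $\mathcal{O}(\epsilon)$ and $\mathcal{O}(N\rho^2)$; the precise bookkeeping mirrors Lemmas~5.1--5.2 of \cite{Huang-Wang-2025-poro-reg}.

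The step I expect to be the main obstacle is exactly this last lower estimate on the $B^{\circ}$-null direction: there the $(2,2)$-leading behavior of $\tilde S_3$ degenerates, and coercivity has to be transferred from the $\frac{2\mu}{\alpha^2}\tilde D$-block through $P^{-1}$ and through the rank-one regularization $\rho\V{w}\V{w}^T$, all while keeping the $\mathcal{O}(\epsilon)$ and $\mathcal{O}(N\rho^2)$ error terms under uniform control and handling the non-uniform-mesh mismatch between $\V{w}$ and $\V{1}/\sqrt{N}$. Once this localized estimate is in hand, reassembling it with the $\text{Null}^{\perp}$ estimate and feeding $\lambda_{\min}\big((T+\hat T)^{-1}S\big)$ back into the quadratic is routine.
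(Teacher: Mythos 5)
Your reduction is the right one and matches the methodology the paper itself uses (its proof of this lemma is only a pointer to Lemma~5.2 of \cite{Huang-Wang-2025-poro-reg}, but the companion Lemma~\ref{lem:eigen_bound_diag_poro} is proved by exactly your quadratic-root device): with $P$, $Q$, $T$, $\hat T$ as you define them, every eigenvalue $\theta\neq 1$ satisfies $\theta^2-(1+g)\theta+(g-e)=0$, the upper bound follows even more simply from $\theta\le\theta_++\theta_-=1+g\le 1+C_2+\epsilon$ (no case distinction or $C_2\ge 1$ needed), and $\theta_-\ge (g-e)/(1+g)$ is correct. Where you overcomplicate things is the lower bound: the subspace splitting into $\text{Null}((B^{\circ})^T)$ and its complement, and the ``main obstacle'' you flag on the $\V{1}$-direction, are unnecessary because that degeneracy has already been absorbed into the constant $C_1$ of Lemma~\ref{lem:b-bound} through the regularizer $\rho\V{w}\V{w}^T$ sitting in both $T$ and $\hat T$. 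A direct global estimate closes the argument: writing $\sigma:=\lambda_{\max}(P^{-1}Q)$ one has
\begin{align*}
e=\frac{\V{x}_2^TQP^{-1}Q\V{x}_2}{\V{x}_2^T\hat T\V{x}_2}\le \sigma\,\frac{\V{x}_2^TQ\V{x}_2}{\V{x}_2^T\hat T\V{x}_2}\le\sigma g,
\qquad
1-\sigma\ \ge\ \frac{\tfrac{2\mu}{\alpha^2}\lambda_{\min}(\tilde D)}{\rho+\epsilon\lambda_{\max}(M_p^{\circ})+\tfrac{2\mu}{\alpha^2}\lambda_{\min}(\tilde D)}
=\frac{\tfrac{2\mu}{\alpha^2}\lambda_{\min}(\tilde D)}{\rho+\tfrac{2\mu}{\alpha^2}\lambda_{\min}(\tilde D)}+\mathcal{O}(\epsilon),
\end{align*}
so that $\theta_-\ge (1-\sigma)\,g/(1+g)\ge (1-\sigma)\,\big(C_1+\mathcal{O}(\epsilon)+\mathcal{O}(N\rho^2)\big)/\big(1+C_1+\cdots\big)$, which is precisely the stated bound and explains its product structure. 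Your route through $\V{1}^TS\V{1}/\V{1}^T(T+\hat T)\V{1}$ and the $\V{w}$-versus-$\V{1}$ mismatch would eventually produce constants of the same order, but it leaves the hardest bookkeeping undone and re-derives what Lemma~\ref{lem:b-bound} already supplies; with the inequality above there is no remaining gap.
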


\begin{proof}
The proof is similar to that of \cite[Lemma~5.2]{Huang-Wang-2025-poro-reg}.
\end{proof}



It is known that for quasi-uniform meshes,
$\lambda_{\min}(M_p^{\circ}) = \mathcal{O}(h^d)$,
$\lambda_{\max}(M_p^{\circ}) = \mathcal{O}(h^d)$, 
and ${\lambda_{\min}(\tilde{D})}
= c_0\mathcal{O}(h^d) + \kappa\Delta t \mathcal{O}(h^d)$ (e.g., see \cite{KamenskiHuangXu_2014}).
Then, we have $\rho = \mathcal{O}(h^d)$, $\mathcal{O}(N \rho^2) = \mathcal{O}(h^d)$, and
$C_1$ and $C_2$ are constants.
As a consequence, Lemmas~\ref{lem:b-bound} and \ref{lem:eigen_bound_3field_tilde}  imply that
the eigenvalues of
$(M_p^\circ + \rho \V{w}\V{w}^T )^{-1} ( \epsilon M_p^{\circ} + \rho \V{w}\V{w}^T + B^{\circ} A_1^{-1} (B^{\circ})^T )$ and $\hat{\tilde{S}}_3^{-1}{\tilde{S}_3}$ are bounded 
below and above essentially by positive constants independent of $h$ and $\epsilon$.
Moreover, it can be shown that it is not necessary to choose $\rho$
exactly as in (\ref{rho-1}) and Lemmas~\ref{lem:b-bound} and \ref{lem:eigen_bound_3field_tilde}
still hold when $\rho$ is taken as
\begin{align}
\rho < \beta^2 \lambda_{\min}(M_p^{\circ}), \quad 
\rho \sim \lambda_{\min}(M_p^{\circ}).
\label{rho-2}
\end{align}
In our computation, we take $\rho = 0.1 \,\lambda_{\min}(M_p^{\circ})$.

We now consider the iterative solution of
system (\ref{3field_eqn3}) using MINRES with a block diagonal
preconditioner ${\mathcal{\tilde{P}}}_{d}$ and GMRES with a block triangular preconditioner
${\mathcal{\tilde{P}}}_{t}$, where
\begin{align}
    & {\mathcal{\tilde{P}}}_{d} = 
    \begin{bmatrix}
        A_1 & 0 \\ 0 & \hat{\tilde{S}}_3 
    \end{bmatrix}
    = 
    \begin{bmatrix}
        A_1 & 0 & 0\\
        0 & \frac{2\mu}{\alpha^2}\tilde{D}  +\epsilon 
         M_p^{\circ}  +
         \rho \V{w}\V{w}^T & 0 \\
   0 & 0 & M_{p}^{\circ} + \rho \V{w} \V{w}^T
    \end{bmatrix},
    \label{3field-diag-0}
\\
& {\mathcal{\tilde{P}}}_{t} =
    \begin{bmatrix}
        A_1 & 0 \\ \begin{bmatrix} 0 \\ - B^{\circ} \end{bmatrix}
        & - \hat{\tilde{S}}_3 
    \end{bmatrix}
    = 
    \begin{bmatrix}
        A_1 & 0 & 0\\
        0 &  - \frac{2\mu}{\alpha^2}\tilde{D}  -\epsilon 
         M_p^{\circ}  -
         \rho \V{w}\V{w}^T & 0 \\
    -B^{\circ} & 0 & -M_{p}^{\circ} -  \rho \V{w} \V{w}^T
    \end{bmatrix} .
    \label{3field-tri-0}
\end{align}

\begin{lem}
\label{lem:eigen_bound_diag_poro}
The eigenvalues of $ \mathcal{\tilde{P}}_{d}^{-1} \mathcal{\tilde{A}}_{3} $ lie in
$[-a_1,-b_1] \cup [c_1,d_1]$, where
\begin{align*}
  & a_1 = \frac{1}{2} \big (\sqrt{4(1+C_2)+1}+1\big ) + \mathcal{O}(\epsilon) ,
  \qquad
   b_1 =  \frac{1}{2} \big (\sqrt{ \frac{4 C_1}{1+C_1} \cdot \frac{\frac{2\mu}{\alpha^2} \lambda_{\min} (\tilde{D})}{\rho 
   + \frac{2\mu}{\alpha^2} \lambda_{\min} (\tilde{D})} + 1} - 1\big ) + \mathcal{O}(\epsilon) ,
   \\
   & c_1 = \frac{1}{2} \big (\sqrt{ \frac{4 C_1}{1+C_1} \cdot \frac{\frac{2\mu}{\alpha^2} \lambda_{\min} (\tilde{D})}{\rho 
   + \frac{2\mu}{\alpha^2} \lambda_{\min} (\tilde{D})} + 1} - 1\big ) + \mathcal{O}(\epsilon),  
    \qquad
   d_1 = \frac{1}{2} \big (\sqrt{4(1+C_2)+1}+1\big ) + \mathcal{O}(\epsilon),
\end{align*}
and $C_1$ and $C_2$ are defined in (\ref{C4C5}).
\end{lem}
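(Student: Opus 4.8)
The plan is to exploit the fact that the leading block of $\mathcal{\tilde{P}}_{d}$ coincides with the leading block $A_1$ of $\mathcal{\tilde{A}}_{3}$, so that the generalized eigenvalue problem for $\mathcal{\tilde{P}}_{d}^{-1}\mathcal{\tilde{A}}_{3}$ collapses to a scalar quadratic whose two coefficients are Rayleigh-type quotients already controlled by Lemma~\ref{lem:eigen_bound_3field_tilde}. First I would note that $\mathcal{\tilde{P}}_{d}$ is SPD (block diagonal with SPD blocks $A_1$, $\frac{2\mu}{\alpha^2}\tilde{D}+\epsilon M_p^\circ+\rho\V{w}\V{w}^T$ and $M_p^\circ+\rho\V{w}\V{w}^T$) and $\mathcal{\tilde{A}}_{3}$ is symmetric, so $\mathcal{\tilde{P}}_{d}^{-1}\mathcal{\tilde{A}}_{3}$ is similar to $\mathcal{\tilde{P}}_{d}^{-1/2}\mathcal{\tilde{A}}_{3}\mathcal{\tilde{P}}_{d}^{-1/2}$ and has only real eigenvalues. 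Grouping the second and third unknowns of \eqref{3field_eqn3} together, I write $\mathcal{\tilde{A}}_{3}$ in $2\times2$ block form with leading block $A_1$, trailing block $-E$, and off-diagonal coupling $\pm C$, where $C$ involves only $B^\circ$ coupling $\V{u}_h$ to the third unknown (the $(1,3)$ block of $\mathcal{\tilde{A}}_{3}$ is $-(B^\circ)^T$). By construction $E$ is symmetric positive semidefinite and, comparing with \eqref{S3_tilde}, $\tilde{S}_3=E+CA_1^{-1}C^T$; moreover $\mathcal{\tilde{P}}_{d}=\text{diag}(A_1,\hat{\tilde{S}}_3)$.

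Let $(\mu,(\V{x},\V{y}))$ be a generalized eigenpair, $\mathcal{\tilde{A}}_{3}(\V{x},\V{y})=\mu\,\mathcal{\tilde{P}}_{d}(\V{x},\V{y})$. If $\mu=1$, it belongs to $[c_1,d_1]$ as soon as $c_1<1<d_1$, which I verify at the end. So suppose $\mu\neq1$; then the first block row $A_1\V{x}+C^T\V{y}=\mu A_1\V{x}$ gives $\V{x}=(\mu-1)^{-1}A_1^{-1}C^T\V{y}$ and forces $\V{y}\neq\V{0}$. Substituting into the trailing block row $C\V{x}-E\V{y}=\mu\hat{\tilde{S}}_3\V{y}$, pairing with $\V{y}$, and using $CA_1^{-1}C^T=\tilde{S}_3-E$ reduces the eigenvalue problem to the scalar quadratic
\[
\mu^2-(1-\sigma)\mu-\tau=0,\qquad \sigma=\frac{\V{y}^TE\V{y}}{\V{y}^T\hat{\tilde{S}}_3\V{y}},\qquad \tau=\frac{\V{y}^T\tilde{S}_3\V{y}}{\V{y}^T\hat{\tilde{S}}_3\V{y}}.
\]
Since $\hat{\tilde{S}}_3$ is SPD, $\tau$ lies between the extreme eigenvalues of $\hat{\tilde{S}}_3^{-1}\tilde{S}_3$, so Lemma~\ref{lem:eigen_bound_3field_tilde} yields $\tau\in[\Theta_1,\Theta_2]$ with $\Theta_2=1+C_2+\epsilon$ and $\Theta_1=\frac{C_1}{1+C_1}\cdot\frac{\frac{2\mu}{\alpha^2}\lambda_{\min}(\tilde{D})}{\rho+\frac{2\mu}{\alpha^2}\lambda_{\min}(\tilde{D})}+\mathcal{O}(\epsilon)+\mathcal{O}(N\rho^2)$.

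The remaining ingredient is a two-sided bound on $\sigma$. Clearly $\sigma\ge0$ since $E\succeq0$. The key estimate is $\sigma\le2$, equivalently $2\hat{\tilde{S}}_3-E\succeq0$, which I would establish by a Schur complement computation with respect to the block $H_1:=\frac{2\mu}{\alpha^2}\tilde{D}+\epsilon M_p^\circ+\rho\V{w}\V{w}^T$ shared by $E$ and $\hat{\tilde{S}}_3$: writing $G:=\epsilon M_p^\circ+\rho\V{w}\V{w}^T$, one has $G\preceq H_1$, hence $GH_1^{-1}G\preceq G$, and the Schur complement of $2\hat{\tilde{S}}_3-E$ equals $2(M_p^\circ+\rho\V{w}\V{w}^T)-G-GH_1^{-1}G\succeq2(M_p^\circ+\rho\V{w}\V{w}^T)-2G=2(1-\epsilon)M_p^\circ\succeq0$ in the locking regime $\epsilon=\frac{2\mu}{\lambda}\le1$.

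Finally, solving the quadratic gives $\mu=\frac12\big((1-\sigma)\pm\sqrt{(1-\sigma)^2+4\tau}\big)$, a pair with product $-\tau<0$, i.e.\ one positive and one negative eigenvalue. On $(\sigma,\tau)\in[0,2]\times[\Theta_1,\Theta_2]$ the positive root is monotone decreasing in $\sigma$, the absolute value of the negative root is monotone increasing in $\sigma$, and both are monotone increasing in $\tau$; hence the negative eigenvalues lie in $[-a_1,-b_1]$ and the positive ones in $[c_1,d_1]$ with $a_1=d_1=\frac12\big(1+\sqrt{1+4\Theta_2}\big)$ (extrema at $(\sigma,\tau)=(2,\Theta_2)$ and $(0,\Theta_2)$) and $b_1=c_1=\frac12\big(\sqrt{1+4\Theta_1}-1\big)$ (extrema at $(0,\Theta_1)$ and $(2,\Theta_1)$). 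Substituting $\Theta_2$ and $\Theta_1$ and absorbing $\mathcal{O}(N\rho^2)=\mathcal{O}(h^d)$ into the $\mathcal{O}(\epsilon)$ terms recovers exactly the stated $a_1,b_1,c_1,d_1$, and since $0<\Theta_1<1$ these satisfy $c_1<1<d_1$, so the value $\mu=1$ is also contained in $[c_1,d_1]$. I expect the only real obstacle to be the estimate $\sigma\le2$, which is the single place where the detailed block structure of $E$ versus $\hat{\tilde{S}}_3$ and the restriction $\epsilon\le1$ are needed; the elimination step, the monotonicity of the roots of the quadratic, and the accounting for the lower-order terms are routine.
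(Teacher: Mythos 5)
Your proposal is correct and follows essentially the same route as the paper: eliminate $\V{u}_h$ from the first block row, reduce to the scalar quadratic $\lambda^2-a\lambda-b=0$ with $a=1-\sigma$ and $b=\tau$ given by Rayleigh quotients against $\hat{\tilde{S}}_3$, bound $b$ via Lemma~\ref{lem:eigen_bound_3field_tilde}, and read off the root locations. The one place you go beyond the paper is the bound $-1\le a\le 1$ (equivalently $0\le\sigma\le 2$), which the paper asserts without proof and which you justify with the Schur-complement estimate $G H_1^{-1}G\preceq G$ together with $E\succeq 0$; that argument, and your explicit treatment of the $\lambda=1$ case, are both sound.
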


\begin{proof}
The eigenvalue problem of the preconditioned system $\mathcal{\tilde{P}}^{-1}_{d}\mathcal{\tilde{A}}_{3}$  reads as
   \begin{align*}
    \begin{bmatrix}
        A_1 & -C^T \\[0.05in]
        -C & -\tilde{S}_{3} + 
        \begin{pmatrix}
            0 & 0 \\ 0 & B^{\circ} A_1^{-1} (B^{\circ})^T
        \end{pmatrix}
        \end{bmatrix}
    \begin{bmatrix}
       \V{u}_h \\
        \V{w}_h
    \end{bmatrix} = \lambda
   \begin{bmatrix}
        A_1 & 0 \\
        0 & \hat{\tilde{S}}_{3}
    \end{bmatrix} 
        \begin{bmatrix}
      \V{u}_h \\
        \V{w}_h
    \end{bmatrix} ,
\end{align*}
where $ C = \begin{bmatrix}
    0 \\ B^{\circ}
\end{bmatrix}.$ 
It leads to the system
\begin{align*}
\begin{cases}
      A_1 \V{u}_h - C^T \V{w}_h = \lambda A_1 \V{u}_h \\
      -C\V{u}_h - \tilde{S}_{3} \V{w}_h  + 
        \begin{pmatrix}
            0 & 0 \\ 0 & B^{\circ} A_1^{-1} (B^{\circ})^T
        \end{pmatrix}\V{w}_h = \lambda \hat{\tilde{S}}_{3} \V{w}_h.
\end{cases}
\end{align*}
When $\lambda \neq 1$, solving the first equation gives $\V{u}_h = \frac{-1}{\lambda - 1} A_1^{-1} C^T \V{w}_h$.
Plugging this into the second equation, we have
\begin{align*}
    \lambda^2 \hat{\tilde{S}}_{3} \V{w}_h - \lambda \Big(\hat{\tilde{S}}_{3} - \tilde{S}_{3} + 
        \begin{pmatrix}
            0 & 0 \\ 0 & B^{\circ} A_1^{-1} (B^{\circ})^T
        \end{pmatrix} \Big) \V{w}_h - \tilde{S}_{3} \V{w}_h = \V{0},
\end{align*}
which yields
\begin{align*}
        \lambda^2 - \lambda \frac{\V{w}_h^T\Big(\hat{\tilde{S}}_{3} - \tilde{S}_{3} + \begin{pmatrix}
            0 & 0 \\ 0 & B^{\circ} A_1^{-1} (B^{\circ})^T
        \end{pmatrix}\Big) \V{w}_h}{\V{w}_h^T \hat{\tilde{S}}_{3} \V{w}_h} - \frac{\V{w}_h^T \tilde{S}_{3} \V{w}_h}{\V{w}_h^T\hat{\tilde{S}}_{3} \V{w}_h} = 0.
\end{align*}
Denoting 
\begin{align*}
    & a = \frac{\V{w}_h^T\Big(\hat{\tilde{S}}_{3} - \tilde{S}_{3} +   \begin{pmatrix}
            0 & 0 \\ 0 & B^{\circ} A_1^{-1} (B^{\circ})^T
        \end{pmatrix} \Big) \V{w}_h}{\V{w}_h^T \hat{\tilde{S}}_{3} \V{w}_h}, \\
        & b = \frac{\V{w}_h^T \tilde{S}_{3} \V{w}_h}{\V{w}_h^T\hat{\tilde{S}}_{3} \V{w}_h},
\end{align*}
we have 
\begin{align*}
    \lambda^2 - a\lambda - b = 0,
\end{align*}
which has the roots
\begin{align*}
    \lambda_{+} = \frac{a + \sqrt{a^2 + 4b}}{2},\quad
    \lambda_{-} = \frac{a - \sqrt{a^2 + 4b}}{2}.
\end{align*}
From Lemma~\ref{lem:eigen_bound_3field_tilde}, 
\[
\frac{C_1}{1+C_1} \cdot  \frac{\frac{2\mu}{\alpha^2} \lambda_{\min} (\tilde{D})}{\rho
   + \frac{2\mu}{\alpha^2} \lambda_{\min} (\tilde{D})} + \mathcal{O}(\epsilon) + \mathcal{O}(N \rho^2) \leq b \leq  1+ C_2 +\epsilon.
\]
It is not difficult to show that
\[
-1 \leq a \leq 1.
\]
We can obtain the conclusion from these bounds on $a$ and $b$
and the expressions of $\lambda_{\pm}$.
\end{proof}

\begin{pro}
\label{pro:MINRES_conv-DBC}
The residual of MINRES applied to a preconditioned system associated
with the coefficient matrix $\tilde{\mathcal{P}}_{d}^{-1} \tilde{\mathcal{A}}_{3} $ is bounded by   
\begin{align}
    \frac{\| \V{r}_{2k}\| }{\| \V{r}_0 \|} 
    \leq 2 \left( \frac{\sqrt{\frac{a_1d_1}{b_1 c_1}} - 1}{\sqrt{\frac{a_1d_1}{b_1 c_1}} + 1}\right)^k,
     \label{pro:MINRES_conv-DBC-1}
\end{align}
where $a_1$, $b_1$, $c_1$, and $d_1$ are given in Lemma~\ref{lem:eigen_bound_diag_poro}.
\end{pro}

\begin{proof}
The result follows from Lemma~\ref{lem:eigen_bound_diag_poro} and \cite[Theorem 6.13]{Elman-2014}. 
\end{proof}

Notice that $a_1$, $b_1$, $c_1$, and $d_1$ are positive.
Moreover, they are constant essentially when a quasi-uniform mesh is used.
Then, the above proposition implies that the convergence of MINRES is essentially
independent of $\epsilon$ and $h$.

\begin{pro}
\label{pro:gmres_conv_DBC}
The residual of GMRES applied to the preconditioned system $ \tilde{\mathcal{P}}_{t}^{-1} \tilde{\mathcal{A}}_{3} $ is bounded by
\begin{align}
\frac{\| \V{r}_k\|}{\| \V{r}_0\|} 
\le
&
2\Bigg(2 + \epsilon + C_{2} + \sqrt{C_{\text{Korn}} \frac{\lambda_{\max} (M_p^{\circ})}{\lambda_{\min} (A_1)}} \Bigg) 
\notag \\
& \times \left( \frac{ \sqrt{1+C_2} - \sqrt{ \frac{C_1}{1+C_1} \cdot  \frac{\frac{2\mu}{\alpha^2} \lambda_{\min} (\tilde{D})}{\rho 
   + \frac{2\mu}{\alpha^2} \lambda_{\min} (\tilde{D})}}     }{  \sqrt{ 1+C_2 }   + \sqrt{\frac{C_1}{1+C_1} \cdot  \frac{\frac{2\mu}{\alpha^2} \lambda_{\min} (\tilde{D})}{\rho 
   + \frac{2\mu}{\alpha^2} \lambda_{\min} (\tilde{D})}}  } + \mathcal{O}(\epsilon) + \mathcal{O}(N \rho^2)  \right)^{k-1}  .
\label{pro:gmres_conv_DBC-0}
\end{align}
\end{pro}

\begin{proof}
From \cite[Lemma~A.1]{HuangWang_CiCP_2025}, we know that the residual of GMRES for the preconditioned system $\tilde{\mathcal{P}}_{t}^{-1} \tilde{\mathcal{A}}_{3} $ is bounded as 
    \begin{align}
        \frac{\| \V{r}_k \|}{\|\V{r}_0\|} \le
        (1+\|A_1^{-1}(B^{\circ})^T\| + \| \hat{\tilde{S}}_{3}^{-1} \tilde{S}_{3} \|) \min\limits_{\substack{p \in \mathbb{P}_{k-1}\\ p(0) = 1}} \| p(\hat{\tilde{S}}_{3}^{-1} \tilde{S}_{3}) \| .
        \label{eigen_bound_tri_poro-DBC-1}
    \end{align}
Lemma~\ref{lem:eigen_bound_3field_tilde} implies that
\[
\| \hat{\tilde{S}}_{3}^{-1} \tilde{S}_{3} \| \leq 1 +\epsilon + C_2.
\]
Moreover, using the Korn inequality and the fact that both $A_1$ and $M_p^{\circ}$ are SPD, we have
\begin{align*}
  \| A_1^{-1} (B^{\circ})^T \|^2
    & \leq \sup_{\V{p}_h \neq 0} \frac{\V{p}_h^T B^{\circ} A_1^{-1} A_1^{-1} (B^{\circ})^T \V{p}_h}{\V{p}_h^T  \V{p}_h} \nonumber
    \\
   & = \sup_{\V{p}_h \neq 0} \frac{\V{p}_h^T (M_p^{\circ})^{\frac{1}{2}} (M_p^{\circ})^{-\frac{1}{2}}B^{\circ} A_1^{-1}A_1^{-1} (B^{\circ})^T (M_p^{\circ})^{-\frac{1}{2}} (M_p^{\circ})^{\frac{1}{2}} \V{p}_h}{\V{p}_h^T \V{p}_h}
   \notag \\
   & \le \lambda_{\max} (A_1^{-1}) \lambda_{\max} (M_p^{\circ})
 \sup_{\V{u}_h \neq 0} \frac{\V{u}_h^T (B^{\circ})^T (M_p^{\circ})^{-1} B^{\circ} \V{u}_h}{\V{u}_h^T A_1 \V{u}_h}
 \notag \\
   & = \lambda_{\max} (A_1^{-1}) \lambda_{\max} (M_p^{\circ})
 \sup_{\V{u}_h \neq 0} \frac{\V{u}_h^T A_0 \V{u}_h}{\V{u}_h^T A_1 \V{u}_h}
 \notag \\
 & \le C_{\text{Korn}} \frac{\lambda_{\max} (M_p^{\circ})}{\lambda_{\min} (A_1)} .
\end{align*}
For the minmax problem in \eqref{eigen_bound_tri_poro-DBC-1}, by shifted Chebyshev polynomials
(e.g., see \cite[Pages 50-52]{Greenbaum-1997}) and Lemma~\ref{lem:eigen_bound_3field_tilde}
(where we denote the lower and upper bounds by $a$ and $b$, respectively), we have
\begin{align*}
   \min\limits_{\substack{p \in \mathbb{P}_{k-1}\\ p(0) = 1}} \| p( \hat{\tilde{S}}_{3}^{-1} \tilde{S}_{3} ) \|
& = \min\limits_{\substack{p \in \mathbb{P}_{k-1}\\ p(0) = 1}} \max_{i=1,..., N} |p(\lambda_i( \hat{\tilde{S}}_{3}^{-1} \tilde{S}_{3} )|
\le \min\limits_{\substack{p \in \mathbb{P}_{k-1}\\ p(0) = 1}} \max_{\gamma \in \left[a,b\right]} |p(\gamma)| 
\\
& \leq 2 \left(\frac{\sqrt{b} - \sqrt{a}}{\sqrt{b} + \sqrt{a}} \right)^{k-1} ,
\end{align*}
which leads to the result in \eqref{pro:gmres_conv_DBC-0}.
\end{proof}

The above proposition shows that the convergence of GMRES is essentially independent of $\epsilon$ and $h$.

To conclude this section we comment that the above convergence analysis for MINRES and GMRES has been
carried out for the reduced system (\ref{3field_eqn3}) that does not include $\V{p}_h^{\partial}$.
Unfortunately, this system is not convenient for implementation since the coefficient
matrix contains $(A_p^{\partial \partial})^{-1}$ in $\tilde{D}$.
To avoid this difficulty, we can add $\V{p}_h^{\partial}$ back to (\ref{3field_eqn3})
using (\ref{popb-1}). Then, we obtain
\begin{equation}
    \mathcal{A}_3
    \begin{bmatrix}
        \mathbf{u}_h \\[0.05in]
        \frac{\alpha}{2\mu}\V{p}_h \\[0.05in]
        \frac{1}{\epsilon} \mathbf{w_h} - \frac{\alpha}{2\mu}\V{p}_h^{\circ}
    \end{bmatrix}
    =
     \begin{bmatrix}
       \frac{1}{2\mu} \mathbf{b}_1 \\[0.05in]
       \frac{1}{\alpha} \mathbf{b}_2 \\[0.05in]
       0
    \end{bmatrix} ,
    \label{3field_eqn4}
\end{equation}
where
\begin{align*}
        \qquad 
    \mathcal{A}_3 = \begin{bmatrix}
      A_1  &  0 & -(B^{\circ})^T\\[0.1in]
     \displaystyle  0 &-\frac{2\mu}{\alpha^2}D - \epsilon \begin{pmatrix}
         M_p^{\circ} & 0 \\
         0 & 0
     \end{pmatrix}- \begin{pmatrix}
         \rho \V{w}\V{w}^T & 0 \\
         0 & 0
     \end{pmatrix}
     & -\epsilon \begin{pmatrix} M_p^{\circ}\\ 0 \end{pmatrix} - \begin{pmatrix}\rho \V{w}\V{w}^T\\ 0 \end{pmatrix} \\[0.1in]
     \displaystyle  -B^{\circ} & -\epsilon \begin{pmatrix} M_p^{\circ} & 0 \end{pmatrix} - \begin{pmatrix} \rho\V{w}\V{w}^T & 0 \end{pmatrix} & -\epsilon M_{p}^{\circ} - \rho \V{w}\V{w}^T
    \end{bmatrix} .
\end{align*}
The Schur complement for this system is
\begin{align}
     &S_3  = 
         \begin{bmatrix}
           \displaystyle  \frac{2\mu}{\alpha^2}D  +\epsilon \begin{pmatrix}
         M_p^{\circ} & 0 \\
         0 & 0
     \end{pmatrix}+ \begin{pmatrix}
         \rho \V{w}\V{w}^T & 0 \\
         0 & 0
     \end{pmatrix} & \epsilon \begin{pmatrix} M_p^{\circ} \\ 0 \end{pmatrix} + \begin{pmatrix} \rho\V{w}\V{w}^T \\ 0 \end{pmatrix} \\[0.1in]
     \displaystyle  \epsilon \begin{pmatrix} M_p^{\circ} & 0 \end{pmatrix} + \begin{pmatrix} \rho\V{w}\V{w}^T & 0 \end{pmatrix}  & \epsilon M_{p}^{\circ} + \rho \V{w}\V{w}^T + B^{\circ} A_1^{-1} (B^{\circ})^T
       \end{bmatrix} .
       \label{S3}
\end{align}
The approximate Schur complement and block diagonal and triangular preconditioners
corresponding to (\ref{hatS3_tilde}), (\ref{3field-diag-0}), and (\ref{3field-tri-0})
are given by
\begin{align}
       & \hat{S}_3 = 
       \begin{bmatrix}
           \displaystyle  \frac{2\mu}{\alpha^2}D  +\epsilon \begin{pmatrix}
         M_p^{\circ} & 0 \\
         0 & 0
     \end{pmatrix}+ \begin{pmatrix}
         \rho \V{w}\V{w}^T & 0 \\
         0 & 0
     \end{pmatrix} & 0 \\[0.1in]
     \displaystyle  0&  M_{p}^{\circ} + \rho \V{w}\V{w}^T
       \end{bmatrix} ,
\label{hatS3}
\\
    & {\mathcal{{P}}}_{d} = 
    \begin{bmatrix}
        A_1 & 0 \\ 0 & \hat{{S}}_3 
    \end{bmatrix}
    = 
    \begin{bmatrix}
        A_1 & 0 & 0\\
        0 & \frac{2\mu}{\alpha^2}{D}  +
        \begin{pmatrix}
            \epsilon 
         M_p^{\circ} & 0 \\
         0 & 0
        \end{pmatrix}     
          + \begin{pmatrix}
             \rho \V{w}\V{w}^T & 0 \\0 & 0 
          \end{pmatrix}
          & 0 \\
   0 & 0 & M_{p}^{\circ} + \rho \V{w}\V{w}^T
    \end{bmatrix},
    \label{3field-diag-full}
\\
& {\mathcal{{P}}}_{t} =
    \begin{bmatrix}
        A_1 & 0 \\ \begin{bmatrix} 0 \\ - B^{\circ} \end{bmatrix}
        & - \hat{{S}}_3 
    \end{bmatrix}
    = 
    \begin{bmatrix}
        A_1 & 0 & 0\\
        0 &  - \frac{2\mu}{\alpha^2}{D} -
        \begin{pmatrix}
            \epsilon 
         M_p^{\circ} & 0 \\
         0 & 0
        \end{pmatrix}     
          -\begin{pmatrix}
             \rho \V{w}\V{w}^T & 0 \\0 & 0 
          \end{pmatrix}& 0 \\
    -B^{\circ} & 0 & -M_{p}^{\circ} - \rho \V{w}\V{w}^T
    \end{bmatrix} .
    \label{3field-tri-full}
\end{align}
Using the results in Propositions~\ref{pro:MINRES_conv-DBC} and \ref{pro:gmres_conv_DBC} we can show that
the convergence of MINRES and GMRES applied to $\mathcal{P}_d^{-1} \mathcal{A}_3$ and 
$\mathcal{P}_t^{-1} \mathcal{A}_3$, respectively, is also independent of $h$ and $\epsilon$ (or $\lambda$).

\section{Convergence analysis of MINRES and GMRES with block preconditioning: Mixed BC scenario}
\label{sec::NBC}

In this section we study the convergence of MINRES and GMRES
with block preconditioning for the linear poroelasticity system (\ref{2by2Scheme_matrix})
under mixed boundary conditions for the solid displacement. 

Recall that for the mixed BC scenario, $(B^{\circ})^T$ has full column rank
(cf. Lemma~\ref{lem:B0-1}) and $B^{\circ} A_1^{-1} (B^{\circ})^T,$ which appears
in the Schur complement, is nonsingular. As a consequence, the entire system
is nonsingular too and its iterative solution does not need regularization
even for the locking regime when $\lambda$ is large.

In principle, we can use either the two-field formulation (\ref{2by2Scheme_matrix})
or the three-field formulation (\ref{3by3Scheme}). For notational consistency
with the pure DBC scenario and coding convenience, we consider the three-field formulation
here for the mixed BC scenario. We start the analysis with the reduced system 
\eqref{3field_eqn3} (without regularization), i.e., 
\begin{align}   
    \begin{bmatrix}
        A_1  &  0 & -(B^{\circ})^T\\[0.1in]
     0 & \displaystyle -\frac{2\mu}{ \alpha^2 }\tilde{D}  - \epsilon M_p^{\circ}& - \epsilon M_p^{\circ}\\[0.1in]
     -B^{\circ} &  -\epsilon M_p^{\circ} & \displaystyle -\epsilon M_{P}^{\circ}
    \end{bmatrix}
      \begin{bmatrix}
        \mathbf{u}_h \\[0.1in]
        \frac{\alpha}{2\mu}\V{p}_h^{\circ} \\[0.1in]
        \frac{1}{\epsilon} \mathbf{z}_h - \frac{\alpha}{2\mu}\V{p}_h^{\circ}
    \end{bmatrix}
    =
     \begin{bmatrix}
       \frac{1}{2\mu} \mathbf{b}_1 \\[0.1in]
       \frac{1}{\alpha} \mathbf{b}_2^{\circ} \\[0.1in]
       0
    \end{bmatrix} .
    \label{NBC_reduced_system}
\end{align}
Denote
\begin{align}
    \tilde{\mathcal{A}}_{3,N} =       
    \begin{bmatrix}
        A_1  &  0 & -(B^{\circ})^T\\[0.1in]
     0 & \displaystyle -\frac{2\mu}{ \alpha^2 }\tilde{D}  - \epsilon M_p^{\circ}& - \epsilon M_p^{\circ}\\[0.1in]
     -B^{\circ} &  -\epsilon M_p^{\circ} & \displaystyle -\epsilon M_{P}^{\circ}
    \end{bmatrix} .
    \label{A-NBC-3}
\end{align}
The Schur complement, its approximation, and block diagonal and triangular preconditioners
for this system will be denoted as $\tilde{S}_{N}$, $\hat{\tilde{S}}_{N}$,
$\tilde{{\mathcal{P}}}_{d,N}$, and $\tilde{{\mathcal{P}}}_{t,N}$, respectively.
They correspond to (\ref{S3_tilde}), (\ref{hatS3_tilde}), (\ref{3field-diag-0}),
and (\ref{3field-tri-0}) without regularization.
The estimation for the eigenvalues for the corresponding matrices and convergence rates for MINRES
and GMRES is similar to the pure DBC scenario in the previous section except that there is no need
to deal with a small eigenvalue as $\epsilon \to 0$ for the current situation.
For completeness, we list the relevant results without providing proofs.

\begin{lem}
\label{lem:eigen_bound_3field_NBC}
The eigenvalues of $\hat{\tilde{S}}_{N}^{-1} \tilde{S}_{N}$ lie in
\begin{align}
   \Bigg[ \frac{\beta^2}{1+\beta^2} + \mathcal{O}(\epsilon), 1 +\epsilon + C_{\text{Korn}}\Bigg],
    \label{lem:eigen_bound_3field_NBC-1}
\end{align}
where $C_{\text{Korn}}$ is the constant from the Korn inequality and $\beta$ is the constant from
the inf-sup condition.
\end{lem}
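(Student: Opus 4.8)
The plan is to reproduce, with the regularization switched off, the two‑step argument behind the pure DBC estimates (Lemmas~\ref{lem:b-bound} and \ref{lem:eigen_bound_3field_tilde}). This is legitimate in the mixed BC scenario because $B^\circ$ is nonsingular (Lemma~\ref{lem:B0-1}), so $G:=B^\circ A_1^{-1}(B^\circ)^T$ is SPD and the $\rho\V{w}\V{w}^T$ term is not needed; the reduced matrices $\tilde S_N$ and $\hat{\tilde S}_N$ are then honestly SPD. Taking $\rho=0$ in the DBC estimates turns the constants $C_1,C_2$ of Lemma~\ref{lem:b-bound} into $\beta^2$ and $C_{\text{Korn}}$ and collapses the factor $\frac{(2\mu/\alpha^2)\lambda_{\min}(\tilde D)}{\rho+(2\mu/\alpha^2)\lambda_{\min}(\tilde D)}$ to $1$, which is exactly the form of \eqref{lem:eigen_bound_3field_NBC-1}.

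\textbf{Step 1 (the pressure block).} First I would show $\beta^2\le \frac{\V{p}_h^T G\V{p}_h}{\V{p}_h^T M_p^\circ\V{p}_h}\le C_{\text{Korn}}$ for all $\V{p}_h\ne\V{0}$. In the mixed BC case $\text{Null}((B^\circ)^T)^\perp=\mathcal{P}_0$, so \eqref{inf-sup-CGWG-2} holds for every $\V{p}_h$, and combined with the duality identity $\sup_{\V{v}_h\ne\V{0}}\frac{(\V{p}_h^T B^\circ\V{v}_h)^2}{\V{v}_h^T A_1\V{v}_h}=\V{p}_h^T G\V{p}_h$ it gives the lower bound. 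For the upper bound, $|\V{p}_h^T B^\circ\V{v}_h|=|\sum_K(\overline{\nabla\cdot\V{v}_h},p_h^\circ)_K|\le\|\overline{\nabla\cdot\V{v}_h}\|\,\|p_h^\circ\|$ and $\|\overline{\nabla\cdot\V{v}_h}\|^2\le\|\nabla\cdot\V{v}_h\|^2\le C_{\text{Korn}}\,\V{v}_h^T A_1\V{v}_h$ by Korn's inequality, so taking the supremum over $\V{v}_h$ yields $\V{p}_h^T G\V{p}_h\le C_{\text{Korn}}\,\V{p}_h^T M_p^\circ\V{p}_h$.

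\textbf{Step 2 (the Schur complement).} Write $\tilde S_N=\begin{bmatrix}P+\epsilon M&\epsilon M\\\epsilon M&\epsilon M+G\end{bmatrix}$ and $\hat{\tilde S}_N=\begin{bmatrix}P+\epsilon M&0\\0&M\end{bmatrix}$ with $P=\frac{2\mu}{\alpha^2}\tilde D$, $M=M_p^\circ$. For $\epsilon=0$, $\hat{\tilde S}_N^{-1}\tilde S_N=\text{diag}(I,M^{-1}G)$ has spectrum in $\{1\}\cup[\beta^2,C_{\text{Korn}}]$ by Step 1. For $\epsilon>0$, the generalized eigenproblem $\tilde S_N v=\lambda\hat{\tilde S}_N v$, $v=(v_1,v_2)$, admits no solution with $\lambda=1$; for $\lambda\ne1$ the first block gives $v_1=\frac{\epsilon}{\lambda-1}(P+\epsilon M)^{-1}M v_2$, and substituting into the second block and pairing with $v_2$ yields
\[
\lambda^2-(1+c)\lambda+(c-\eta)=0,\qquad c=\frac{v_2^T(\epsilon M+G)v_2}{v_2^T M v_2},\qquad \eta=\frac{\epsilon^2\, v_2^T M(P+\epsilon M)^{-1}M v_2}{v_2^T M v_2}.
\]
Here $c=\epsilon+\tilde g$ with $\tilde g:=\frac{v_2^T G v_2}{v_2^T M v_2}\in[\beta^2,C_{\text{Korn}}]$ by Step 1, and $0\le\eta\le\epsilon$ (indeed $\eta=\mathcal{O}(\epsilon^2)$ since $\tilde D\succeq c_0 M$ gives $P^{-1}\preceq\frac{\alpha^2}{2\mu c_0}M^{-1}$), so $c-\eta\ge\tilde g\ge\beta^2>0$ and $c-\eta=\tilde g+\mathcal{O}(\epsilon)$. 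Hence both roots are positive, with sum $1+c$ and product $c-\eta$, giving $\lambda_+<1+c\le1+\epsilon+C_{\text{Korn}}$ and $\lambda_-=\frac{c-\eta}{\lambda_+}\ge\frac{c-\eta}{1+c}\ge\frac{\tilde g}{1+\tilde g}+\mathcal{O}(\epsilon)\ge\frac{\beta^2}{1+\beta^2}+\mathcal{O}(\epsilon)$, using that $t\mapsto t/(1+t)$ is increasing; together with the $\epsilon=0$ case this is \eqref{lem:eigen_bound_3field_NBC-1}.

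The main obstacle is not conceptual but a matter of careful bookkeeping: one must check that the off‑diagonal $\epsilon M_p^\circ$ blocks and the term $\eta$ perturb the clean $\rho=0$, $\epsilon=0$ bounds by only $\mathcal{O}(\epsilon)$, and confirm that $\tilde S_N,\hat{\tilde S}_N$ stay SPD so the generalized eigenproblem is well posed — which is precisely where nonsingularity of $B^\circ$ (Lemma~\ref{lem:B0-1}) enters. An equivalent route is to estimate the generalized Rayleigh quotient $\frac{v^T\tilde S_N v}{v^T\hat{\tilde S}_N v}$ directly through the $2\times2$ block structure (equivalently, to show $\tilde S_N-\kappa\hat{\tilde S}_N\succeq0$ via the Schur complement of its $(1,1)$ block); I expect the quadratic‑equation route above, being the literal analogue of the DBC argument, to be the cleanest to write down.
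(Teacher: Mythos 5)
Your proposal is correct and follows essentially the route the paper intends: the paper omits the proof, stating only that it parallels the pure DBC analysis (Lemmas~\ref{lem:b-bound} and \ref{lem:eigen_bound_3field_tilde}, themselves deferred to the companion work), and your two steps --- bounding $M^{-1}B^{\circ}A_1^{-1}(B^{\circ})^T$ via the inf-sup condition and the Korn-type bound, then reducing the generalized eigenproblem to the quadratic $\lambda^2-(1+c)\lambda+(c-\eta)=0$ --- are exactly that argument with $\rho=0$. The details check out, including the treatment of $\lambda=1$, the bound $0\le\eta\le\epsilon$, and the monotonicity step giving the lower bound $\beta^2/(1+\beta^2)+\mathcal{O}(\epsilon)$.
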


\begin{lem}
\label{lem:eigen_bound_diag_poro-NBC}
The eigenvalues of $ \tilde{\mathcal{P}}_{d,N}^{-1} \tilde{\mathcal{A}}_{3,N} $ lie in $[-a_2,-b_2]\cup [c_2,d_2]$,
where
\begin{align*}
&a_2 = \frac{1}{2} \big (\sqrt{4(1+C_{\text{Korn}}) + 1} + 1\big ) + \mathcal{O}(\epsilon),\qquad
b_2 = \frac{1}{2} \big (\sqrt{4 \frac{\beta^2}{1+\beta^2} + 1} - 1\big ) + \mathcal{O}(\epsilon),
\\
& c_2 = \frac{1}{2} \big (\sqrt{4 \frac{\beta^2}{1+\beta^2} + 1} - 1\big ) + \mathcal{O}(\epsilon) ,
\qquad
d_2 = \frac{1}{2} \big (\sqrt{4(1+C_{\text{Korn}}) + 1} + 1\big ) + \mathcal{O}(\epsilon) .
\end{align*}
\end{lem}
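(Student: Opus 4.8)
The plan is to follow verbatim the argument used for Lemma~\ref{lem:eigen_bound_diag_poro} in the pure DBC scenario, since $\tilde{\mathcal{A}}_{3,N}$ and $\tilde{\mathcal{P}}_{d,N}$ have exactly the same block structure as $\tilde{\mathcal{A}}_3$ and $\tilde{\mathcal{P}}_d$, only without the rank-one term $\rho\V{w}\V{w}^T$. Writing $C = \begin{bmatrix} 0 \\ B^{\circ} \end{bmatrix}$ and noting that the lower-right block of $\tilde{\mathcal{A}}_{3,N}$ equals $-\tilde{S}_{N} + \begin{pmatrix} 0 & 0 \\ 0 & B^{\circ} A_1^{-1} (B^{\circ})^T \end{pmatrix}$, I would first cast the generalized eigenvalue problem $\tilde{\mathcal{A}}_{3,N}\V{x} = \lambda\,\tilde{\mathcal{P}}_{d,N}\V{x}$ with $\V{x} = (\V{u}_h, \V{w}_h)$ as
\[
\begin{cases}
A_1 \V{u}_h - C^T \V{w}_h = \lambda A_1 \V{u}_h, \\[2pt]
-C\V{u}_h - \tilde{S}_{N}\V{w}_h + \begin{pmatrix} 0 & 0 \\ 0 & B^{\circ} A_1^{-1} (B^{\circ})^T \end{pmatrix}\V{w}_h = \lambda\,\hat{\tilde{S}}_{N}\V{w}_h.
\end{cases}
\]
The value $\lambda = 1$ lies in $[c_2, d_2]$ — indeed $c_2 < 1 < d_2$ follows from $\beta < 1$ (which gives $\tfrac{\beta^2}{1+\beta^2} < \tfrac12$, hence $c_2 < \tfrac{\sqrt3-1}{2} < 1$) and $C_{\text{Korn}} > 0$ (which gives $d_2 > \tfrac12(\sqrt5+1) > 1$) — so it may be handled as a harmless special case. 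For $\lambda \neq 1$ I would eliminate $\V{u}_h = -\tfrac{1}{\lambda-1} A_1^{-1} C^T \V{w}_h$ from the first equation, substitute into the second, and reduce to the scalar quadratic $\lambda^2 - a\lambda - b = 0$ with roots $\lambda_\pm = \tfrac12\big(a \pm \sqrt{a^2 + 4b}\big)$, where $b = \V{w}_h^T \tilde{S}_{N}\V{w}_h / \V{w}_h^T \hat{\tilde{S}}_{N}\V{w}_h$ and $a = \V{w}_h^T\big(\hat{\tilde{S}}_{N} - \tilde{S}_{N} + \begin{pmatrix} 0 & 0 \\ 0 & B^{\circ} A_1^{-1} (B^{\circ})^T \end{pmatrix}\big)\V{w}_h / \V{w}_h^T \hat{\tilde{S}}_{N}\V{w}_h$.

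Next I would bound $a$ and $b$. The bound on $b$ is immediate from Lemma~\ref{lem:eigen_bound_3field_NBC}, giving $\tfrac{\beta^2}{1+\beta^2} + \mathcal{O}(\epsilon) \le b \le 1 + \epsilon + C_{\text{Korn}}$. For $a$, a direct computation gives $\hat{\tilde{S}}_{N} - \tilde{S}_{N} + \begin{pmatrix} 0 & 0 \\ 0 & B^{\circ} A_1^{-1} (B^{\circ})^T \end{pmatrix} = \begin{bmatrix} 0 & -\epsilon M_p^{\circ} \\ -\epsilon M_p^{\circ} & (1-\epsilon) M_p^{\circ} \end{bmatrix}$, whose associated quadratic form, divided by $\V{w}_h^T \hat{\tilde{S}}_{N}\V{w}_h$ with $\hat{\tilde{S}}_{N} = \begin{bmatrix} \tfrac{2\mu}{\alpha^2}\tilde{D} + \epsilon M_p^{\circ} & 0 \\ 0 & M_p^{\circ} \end{bmatrix}$, is bounded by $1$ in absolute value up to an $\mathcal{O}(\epsilon)$ correction by a short Cauchy–Schwarz / congruence argument; following the DBC proof I would absorb that correction and write $-1 \le a \le 1$. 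Feeding these bounds into $\lambda_\pm$, evaluating $\tfrac12\big(\pm 1 \pm \sqrt{1+4b}\big)$ at the extreme values of $b$, yields $\lambda_+ \in [c_2, d_2]$ and $\lambda_- \in [-a_2, -b_2]$ with exactly the stated $a_2, b_2, c_2, d_2$, completing the argument.

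I expect the only genuinely delicate point to be the bound $-1 \le a \le 1$ (up to $\mathcal{O}(\epsilon)$): unlike the regularized DBC case one must check that dropping $\rho\V{w}\V{w}^T$ does not harm the estimate, i.e., that the off-diagonal $\epsilon M_p^{\circ}$ coupling together with the $(1-\epsilon)M_p^{\circ}$ diagonal correction is still controlled by $\hat{\tilde{S}}_{N}$. This is in fact straightforward here precisely because in the mixed BC scenario $B^{\circ} A_1^{-1}(B^{\circ})^T$ is already nonsingular (Lemma~\ref{lem:B0-1}), so the lower-right block of $\hat{\tilde{S}}_{N}$ can be taken to be exactly $M_p^{\circ}$ with no regularization; every remaining step is routine and identical in form to the proof of Lemma~\ref{lem:eigen_bound_diag_poro}, so I would simply refer the reader there for the details.
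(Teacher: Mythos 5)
Your proposal is correct and follows exactly the route the paper intends: the paper omits this proof, stating only that the argument mirrors that of Lemma~\ref{lem:eigen_bound_diag_poro} with $\rho=0$, and your reconstruction (block elimination to the quadratic $\lambda^2-a\lambda-b=0$, bounding $b$ via Lemma~\ref{lem:eigen_bound_3field_NBC} and $a$ via the explicit form of $\hat{\tilde{S}}_{N}-\tilde{S}_{N}+\mathrm{diag}(0,B^{\circ}A_1^{-1}(B^{\circ})^T)$) is precisely that omitted argument. Your explicit check that $\lambda=1$ falls inside $[c_2,d_2]$ is a small refinement the paper's DBC proof glosses over.
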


\begin{pro}
\label{pro:MINRES_conv-NBC}
The residual of MINRES applied to a preconditioned system associated
with the coefficient matrix $\tilde{\mathcal{P}}_{d,N}^{-1} \tilde{\mathcal{A}}_{3,N} $ is bounded by   
\begin{align}
    \frac{\| \V{r}_{2k}\| }{\| \V{r}_0 \|} 
    \le 2  \left(\frac{\sqrt{\frac{a_2 d_2}{b_2 c_2}}-1}{\sqrt{\frac{a_2 d_2}{b_2 c_2}}+1} \right)^k  .
     \label{pro:MINRES_conv-NBC-1}
\end{align}
\end{pro}

\begin{pro}
\label{pro:gmres_tri_poro-NBC}
The residual of GMRES applied to the preconditioned system $ \tilde{\mathcal{P}}_{t,N}^{-1} \tilde{\mathcal{A}}_{3,N} $ is bounded by
\begin{align}
\frac{\| \V{r}_k\|}{\| \V{r}_0\|} 
\le
&
2\Bigg(2 + \epsilon + C_2 + \sqrt{C_{\text{Korn}} \frac{\lambda_{\max} (M_p^{\circ})}{\lambda_{\min} (A_1)}} \Bigg) 
\left(\frac{ \sqrt{1+C_{\text{Korn}}} - \sqrt{\frac{\beta^2}{1+\beta^2}} }{  \sqrt{1+C_{\text{Korn}}} + \sqrt{\frac{\beta^2}{1+\beta^2}}} +\mathcal{O}(\epsilon) \right)^{k-1}  .
\label{gmres_tri_poro-NBC-0}
\end{align}
\end{pro}

The above propositions show that the convergence of MINRES and GMRES with inexact block Schur complement preconditioning
is essentially independent of $h$ and $\epsilon$ (or $\lambda$) for the linear poroelasticity with mixed BCs
where a quasi-uniform mesh is used.

Like the pure DBC scenario in the previous section, the preconditioning and MINRES and GMRES are actually implemented
for the full three-field system (see (\ref{3field_eqn4}), (\ref{3field-diag-full}), and (\ref{3field-tri-full})
with $\rho = 0$) and MINRES and GMRES have similar parameter-free convergence.


\section{Numerical experiments}
\label{SEC:numerical}

In this section, we present numerical results in both two and three dimensions
to illustrate the effectiveness of MINRES and GMRES, using
the block diagonal and triangular Schur complement preconditioners, respectively.
We examine scenarios with both pure Dirichlet and mixed boundary conditions.
Additionally, a spinal cord simulation is included to demonstrate
the robustness of the proposed preconditioning strategies in a real-world application.

For the linear poroelasticity problem with pure Dirichlet boundary conditions, we take
the regularization constant as $\rho = 0.1\, \lambda_{\min}(M_p^{\circ})$ (cf. \eqref{rho-2}). 
We use MATLAB's function {\em minres} and {\em gmres} with $tol = 10^{-8}$,
a maximum of 1000 iterations, and the zero vector as the initial guess for both 2D and 3D examples.
For {\em gmres}, we set the restart parameter is 30.
For the action of inversion of block preconditioners $\mathcal{P}_{d}$ and $ \mathcal{P}_{t}$,
linear systems associated with the leading block $A_1$ \eqref{A1-1} are solved
using the conjugate gradient (CG) method, preconditioned
with an incomplete Cholesky decomposition of $A_1$ computed 
using MATLAB's function {\em ichol} with threshold dropping.
For the action of inversion of the middle block, we use the CG method
preconditioned with an incomplete Cholesky decomposition of
\[
\frac{2\mu}{\alpha^2}D +\epsilon \begin{pmatrix}
         M_p^{\circ} & 0 \\ 0 & 0
    \end{pmatrix} .
\]
The action of inversion of the last block is carried out using matrix-vector multiplication as follows.
Using the Sherman-Morrison-Woodbury formula, we have
\begin{align*}
    \big ( M_p^{\circ} + \rho \V{w} \V{w}\big )^{-1}
    = (M_p^{\circ})^{-1} - \frac{\rho (M_p^{\circ})^{-1} \V{w} \V{w}^T
    (M_p^{\circ})^{-1}}{1+ \rho \V{w}^T (M_p^{\circ})^{-1} \V{w}} .
\end{align*}
Multiplication of this matrix with vectors is straightforward and can be computed efficiently since
$M_p^{\circ}$ is diagonal. Notice that for the mixed BC scenario, no regularization is needed
so we set $\rho = 0$. The setting of the solvers, including tolerance, maximum number
of iterations, initial guess, and preconditioner inversion methods, is the same for
both the pure DBC and mixed BC scenarios.

\subsection{Smooth solutions}
\label{Section_LinPoro2by2}

We first consider examples with smooth solutions.
The first example is a two-dimensional poroelasticity problem taken from \cite{SISCLeePierMarRog2019}.
Its right-hand side functions are given by
\begin{align*}
\mathbf{f} & = 
 - t \begin{bmatrix}
        -8 \pi^2 \mu \cos(2 \pi x) \sin(2 \pi y) - \frac{2 \pi^2 \mu}{\lambda + \mu} \sin(\pi x) \sin(\pi y) \\
      + 4 \pi^2 \mu \sin(2\pi y) 
      + \pi^2 \cos(\pi x+\pi y)
      + \alpha \pi \cos(\pi x)\sin(\pi y) 
      \\[0.08in]
        8 \pi^2 \mu \sin(2 \pi x) \cos(2 \pi y) - \frac{2 \pi^2 \mu}{\lambda + \mu} \sin(\pi x) \sin(\pi y) \\
      - 4 \pi^2 \mu \sin(2\pi x) 
      + \pi^2 \cos(\pi x+\pi y)
      + \alpha \pi \sin(\pi x)\cos(\pi y)
\end{bmatrix},
\\ 
s & = -c_0 \sin(\pi x)\sin(\pi y) 
+ \frac{\pi \alpha}{\lambda + \mu} \sin(\pi x+\pi y)
	  +t \Big(2 \pi^2 \sin(\pi x) \sin(\pi y)\Big) .
\end{align*}
We take the values of the parameters as $c_0 = 1$, $\kappa = 1$, $\mu = 1$,
$\lambda = 1 \text{ or } 10^4$, and $\Delta t = 10^{-3}$ and $ 10^{-6}$.
We consider two boundary scenarios,  one with pure Dirichlet boundary conditions for both $\V{u}$ and $p$, and the other with mixed boundary conditions.
\begin{description}
\item[Scenario I.]
Both $\V{u}$ and $p$ have pure Dirichlet boundary conditions: 
\[
\V{u} = \V{u}_D \text{ on } {\partial \Omega},
\quad p = p_D \text{ on } {\partial \Omega}.
\]
\item[Scenario II.]
Both $\V{u}$ and $p$ have mixed boundary conditions:
\begin{align*}
& 
\V{u} = \V{u}_D \text{ on } \partial \Omega \setminus \{ x = 1 \}, \qquad  ( \V{\sigma} - \alpha p \V{I} ) \cdot \V{n} = \V{t}_N \text{ on } x = 1; 
\\
&
p = p_D   \text{ on } \partial \Omega \setminus \{ x = 1 \}, \qquad
 p_N = \kappa \nabla p \cdot \V{n}  \text{ on } x = 1.
\end{align*}
\end{description}
The boundary data $\V{u}_D$, $p_{D}$, $\V{t}_N$, and $p_N$ are chosen such that
the exact solution is given by
\begin{align*}
    \V{u} = t 
    \begin{bmatrix}
        (-1 + \cos(2 \pi x)) \sin(2 \pi y)  + \frac{1}{\lambda + \mu} \sin(\pi x) \sin(\pi y) \\[0.08in]
        \sin(2 \pi x) (1 - \cos(2 \pi y)) + \frac{1}{\lambda + \mu} \sin(\pi x) \sin(\pi y)     
    \end{bmatrix},
    \quad
    p =  -t  \sin(\pi x) \sin(\pi y).
\end{align*}

The second example is a three-dimensional test problem with the right-hand side functions as
\begin{align*}
\mathbf{f} & = t
 \begin{bmatrix}
 4\mu \cos(2\pi x) \sin(2\pi y) \sin(2\pi z) \pi^2 + \frac{(4\mu + \lambda)}{(\mu + \lambda)} \sin(\pi x) \sin(\pi y) \sin(\pi z) \pi^2 \\
- \cos(\pi x) \cos(\pi y) \sin(\pi z) \pi^2 - \cos(\pi x) \sin(\pi y) \cos(\pi z) \pi^2 \\
+ 8\pi^2 \mu \left(-1 + \cos(2\pi x)\right) \sin(2\pi y) \sin(2\pi z) + \alpha \pi \cos(\pi x) \sin(\pi y) \sin(\pi z)
\\[0.08in]
- \pi^2 \cos(\pi x) \cos(\pi y) \sin(\pi z) + \frac{(4\mu + \lambda)}{(\mu + \lambda)} \sin(\pi x) \sin(\pi y) \sin(\pi z) \pi^2 \\
- \sin(\pi x) \cos(\pi y) \cos(\pi z) \pi^2 + 16\pi^2 \mu \sin(2\pi x) \left(1 - \cos(2\pi y)\right) \sin(2\pi z) \\
- 8\pi^2 \mu \sin(2\pi x) \cos(2\pi y) \sin(2\pi z) + \alpha \pi \sin(\pi x) \cos(\pi y) \sin(\pi z)
\\[0.08in]
 \frac{(4\mu + \lambda)}{(\mu + \lambda)} \sin(\pi x) \sin(\pi y) \sin(\pi z) \pi^2 + 4\pi^2 \mu \sin(2\pi x) \sin(2\pi y) \cos(2\pi z) \\
- \cos(\pi x) \sin(\pi y) \cos(\pi z) \pi^2 - \sin(\pi x) \cos(\pi y) \cos(\pi z) \pi^2 \\
+ 8\pi^2 \mu \left(-1 + \cos(2\pi z)\right) \sin(2\pi x) \sin(2\pi y) + \alpha \pi \sin(\pi x) \sin(\pi y) \cos(\pi z)
\end{bmatrix},
\\
s &= \frac{\alpha \pi}{\mu + \lambda} \Big( \cos(\pi x) \sin(\pi y) \sin(\pi z) + \sin(\pi x) \cos(\pi y) \sin(\pi z)
\\
& \qquad \qquad + \sin(\pi x) \sin(\pi y) \cos(\pi z) \Big)
 + (3 \pi^2 t + c_0)\sin(\pi x) \sin(\pi y) \sin(\pi z) .
\end{align*}
The exact solution is
\begin{align*}
   & \V{u} = t 
    \begin{bmatrix}
        (-1 + \cos(2 \pi x)) \sin(2 \pi y)  \sin(2 \pi z)  + \frac{1}{\lambda + \mu} \sin(\pi x) \sin(\pi y)\sin(\pi z) \\[0.08in]
        2\sin(2 \pi x) (1 - \cos(2 \pi y))\sin(2 \pi z)  + \frac{1}{\lambda + \mu} \sin(\pi x) \sin(\pi y) \sin(\pi z)    \\[0.08in]
        \sin(2 \pi x)\sin(2 \pi y) (-1 + \cos(2 \pi z))  + \frac{1}{\lambda + \mu} \sin(\pi x) \sin(\pi y) \sin(\pi z)    
    \end{bmatrix},
    \\
    &     p =  t  \sin(\pi x) \sin(\pi y) \sin(\pi z) .
\end{align*}
The BCs and other parameters are chosen the same as for the two-dimensional example.

As stated in Propositions~\ref{pro:MINRES_conv-DBC} and~\ref{pro:gmres_conv_DBC} for systems
with pure Dirichlet boundary conditions and Propositions~\ref{pro:MINRES_conv-NBC}
and~\ref{pro:gmres_tri_poro-NBC} for systems with mixed boundary conditions,
the convergence of MINRES and GMRES is essentially independent of $h$ and $\epsilon$ (or $\lambda$).
Tables~\ref{Poro-2D} and~\ref{Poro-3D} list the number of iterations for the 2D and 3D examples under
two types of boundary conditions.
We observe that the iteration numbers remain consistent with variations in $\Delta t$, $\lambda$, and $h$. 
Moreover, the number of MINRES iterations is approximately twice that of GMRES,
consistent with the theoretical findings in Propositions~\ref{pro:MINRES_conv-DBC},
\ref{pro:gmres_conv_DBC}, \ref{pro:MINRES_conv-NBC},
and~\ref{pro:gmres_tri_poro-NBC} where similar bounds are given for 
the ratios $\|\V{r}_{2k}\|/\|\V{r}_0\|$ for MINRES and $\|\V{r}_{k}\|/\|\V{r}_0\|$ for GMRES.

\begin{table}[htb!]
    \centering
    \caption{The 2D Example for linear poroelasticity: The number of MINRES and GMRES iterations required to reach convergence for preconditioned systems, with $\lambda = 1$, $10^{4}$ and
    $\Delta t = 10^{-3}$, $10^{-6}$.}
           \begin{tabular}{|c|c|c|c|c|c|c|c|}
        \hline
        & & & \multicolumn{4}{c|}{$N$} \\ \cline{4-7}
       & $\Delta t$ & $\lambda$ & 918 & 3680 & 14720 & 58608 \\ \hline
       \multicolumn{7}{|c|}{Scenario I (Pure Dirichlet boundary condition)}
       \\ \hline
       MINRES & $10^{-3}$ & $1$ & 37  & 40 & 43 &   44 \\ 
        & $10^{-3}$ & $10^{4}$ &  30 &  33& 35 &  36  \\
         & $10^{-6}$ & $1$ &  26  & 28 & 30 &   35 \\ 
        & $10^{-6}$ & $10^{4}$ &  30 &32  & 34 & 36  \\
       \hline 
       GMRES & $10^{-3}$ & $1$ & 23  & 23 & 23 &23   \\ 
        & $10^{-3}$ & $10^{4}$ & 15  & 16 &16  &  16  \\ 
        & $10^{-6}$ & $1$ &  18 & 19 & 19 & 20 \\ 
        & $10^{-6}$ & $10^{4}$ &  15  & 16 &16  &  16 \\ \hline
       \multicolumn{7}{|c|}{Scenario II (Mixed boundary condition)}
       \\ \hline
              MINRES & $10^{-3}$ & $1$ & 47  & 49 & 52 &  52  \\ 
        & $10^{-3}$ & $10^{4}$ & 40  &  40& 40 &  40  \\
         & $10^{-6}$ & $1$ & 38  & 40 & 40 &  45  \\ 
        & $10^{-6}$ & $10^{4}$ & 40  & 40 & 40 &  40 \\
       \hline 
       GMRES & $10^{-3}$ & $1$ & 25  & 26 & 26 & 26  \\ 
        & $10^{-3}$ & $10^{4}$ & 21  & 22 & 22 &23    \\ 
        & $10^{-6}$ & $1$ &  23 & 23 & 23 &  25\\ 
        & $10^{-6}$ & $10^{4}$ &  21 & 22 & 22 & 23  \\ \hline
    \end{tabular}
    \label{Poro-2D}
\end{table}

\begin{table}[htb!]
    \centering
    \caption{The 3D Example for linear poroelasticity: The number of MINRES and GMRES iterations required to reach convergence for preconditioned systems, with $\lambda = 1$, $10^{4}$ and
    $\Delta t = 10^{-3}$, $10^{-6}$.}
           \begin{tabular}{|c|c|c|c|c|c|c|c|}
        \hline
        & & & \multicolumn{4}{c|}{$N$} \\ \cline{4-7}
       & $\Delta t$ & $\lambda$  & 65171 & 526031 & 1777571 & 4217332\\ \hline
       \multicolumn{7}{|c|}{Scenario I (Pure Dirichlet boundary condition)}
       \\ \hline
       MINRES & $10^{-3}$ & $1$   &30  & 31 & 34   & 34 \\ 
        & $10^{-3}$ & $10^{4}$   &  52& 52 &  52  & 54\\
         & $10^{-6}$ & $1$   & 30 & 31 &  31   & 31\\ 
        & $10^{-6}$ & $10^{4}$  &  52& 52 &  51  &52\\
       \hline 
       GMRES & $10^{-3}$ & $1$  & 20 &20  & 20  & 21\\ 
        & $10^{-3}$ & $10^{4}$  &  28&  27& 27   &27 \\ 
        & $10^{-6}$ & $1$  & 20 &20  & 20  & 20\\ 
        & $10^{-6}$ & $10^{4}$  &  28&  27& 27    &27\\ \hline
       \multicolumn{7}{|c|}{Scenario II (Mixed boundary condition)}
       \\ \hline
      MINRES & $10^{-3}$ & $1$  & 40 & 42 & 42  & 44\\ 
        & $10^{-3}$ & $10^{4}$  & 58 & 56 &  58   & 58\\
         & $10^{-6}$ & $1$   & 40 & 42 &  44   &44\\ 
        & $10^{-6}$ & $10^{4}$   &  56& 56 &  58  & 58\\
       \hline 
       GMRES & $10^{-3}$ & $1$   & 25 & 25 & 25  &25 \\ 
        & $10^{-3}$ & $10^{4}$   & 33 & 33 & 33   &33 \\ 
        & $10^{-6}$ & $1$   & 25 &25  &  25  & 25\\ 
        & $10^{-6}$ & $10^{4}$   &33  & 33 & 33  & 33\\ \hline
    \end{tabular}
    \label{Poro-3D}
\end{table}

\subsection{A linear poroelastic simulation of the spinal cord}

Now we consider the simulation of a real-world application: cross-sections of the spinal cord.
Anatomically, the spinal cord consists of gray and white matter, with the gray matter forming an H-shaped (or butterfly-shaped) structure in cross-section, surrounded by the white matter. 
As shown in Fig.~\ref{fig:spinal_DTI}(a), the inner marked in gray represents the gray matter,
surrounded by the white matter (in the darker area). There is the pia mater adheres to the white matter.
It is known (e.g., see \cite{khan,Stoverud25042016}) that the spinal cord can be modeled
as a poroelastic medium, with the flow and deformation in the spinal cord being
governed by the linear poroelasticity equations.
These modeling and studies are of importance to the understanding and treatments
of spinal cord injuries; e.g., see \cite{YU2020104044}.

Following \cite{khan,Stoverud25042016} we consider a scenario with the parameter values
listed in Table~\ref{tab:value_parameter}. Notice that Young's modulus
and the permeability have different values in the gray and white matter and pia mater.
The system has an antero-posterior diameter of 0.9\,cm and a transverse diameter of 1.3\,cm.
Fig.~\ref{fig:spinal_DTI}(b) shows a mesh used for the problem, which was smoothed
using a moving mesh method through the {Matlab} package \texttt{MMPDElab} \cite{Huang_MMPDE}.
The domain boundary is divided into three segments as
\begin{itemize}
    \item Boundary 1: The anterior part of the pia mater, with a length of 0.4\,cm
    (on top of the mesh, marked in red color);
    \item Boundary 2: The posterior part of the pia mater, with a length of 0.4\,cm
    (at bottom of the mesh, marked in blue color);
    \item Boundary 3: The remaining boundary (marked in black color).
\end{itemize}
Motivated by \cite{khan,Meddahi_SISC_2023, Stoverud25042016}, we impose mixed boundary conditions
on the poroelastic system as follows.
\begin{itemize}
    \item Displacement: 
    \begin{itemize}
        \item Boundary 1: 
        Time-dependent traction
        \[
        \V{t}_N = \Big(0,-9000\,(t/0.1)^{0.5} e^{-5 t+0.5}\Big)^T,
        \]
        with the maximum value of $9000$, as shown in Fig.~\ref{fig_traction_2}. 
        \item Boundary 2: Homogeneous Dirichlet boundary condition, $\V{u}_D = \V{0}$.
        \item Boundary 3: Traction-free boundary condition, $\V{t}_N = \V{0}$.
    \end{itemize}
    \item Pressure:
    \begin{itemize}
        \item Boundary 1: Time-dependent fluid pressure, $p_D = 9000\,(t/0.1)^{0.5} e^{-5 t+0.5} \, (\mathrm{dyne/cm}^2)$.
        \item Boundary 2: Traction-free for pressure, $p_N = 0$.
        \item Boundary 3: Homogeneous Dirichlet boundary condition, $p_D = 0$.
    \end{itemize}
\end{itemize}
Note that the boundary condition $p_D(t)$ imposed on the top of the pia mater
is a rough fitting to the actual data shown in \cite[Figure 4]{Stoverud25042016},
which was measured \textit{in vivo} from a Chiari patient at Oslo University Hospital.

The above boundary conditions simulate a situation with an indentation applied at the top of the pia mater,
a fixed bottom, and zero traction on the lateral boundaries. Under these boundary conditions,
the system will experience vertical compression and lateral expansion. 
This leads to more horizontal deformation within the tissue. 
At the same time, the fluid is expected to flow from higher pressure regions
to lower pressure regions and exit through the bottom of the pia mater,
while zero-pressure conditions are imposed on the two lateral sides.

We take $\V{f} = \V{0}$ (body force), $s = 0$ (fluid source), $\alpha = 1$, and $c_0 = 10^{-6}$,
and use the parameter values in Table~\ref{tab:value_parameter}.
Since the Lam\'e constant $\lambda$ increases with Young's modulus and leads to nearly incompressible of the elastic material, the pia mater is significantly stiffer than both white and gray matter.
As a result, the simulation of this problem involves a near-incompressibility condition.
Moreover, the discontinuities in Young's modulus and permeability can also slow down
the convergence of iterative solution for the entire system.
Thus, this application problem is also a good test example for the proposed
preconditioning strategies.

\begin{table}[htbp]
\centering
\begin{tabular}{|c|c|c|c|}
\hline
  & Young's modulus ($\mathrm{dyne/cm}^2$)& Poisson's ratio & Permeability $\kappa$ ($\mathrm{cm^4/(dyne\cdot s)}$) \\
\hline
pia mater & 2.3$\times 10^{7}$ & 0.479 & $\frac{3}{7}\times 10^{-8}$ \\
white matter & 5$\times 10^{4}$  & 0.479 & $2\times 10^{-8}$ \\
gray matter & 5$\times 10^{4}$ & 0.479 & $2\times 10^{-9}$ \\
\hline
\end{tabular}
\caption{Values of parameters from \cite{Stoverud25042016}.}
\label{tab:value_parameter}
\end{table}

Fig.~\ref{fig_spinal} presents the displacement magnitude and pressure at various time instants
$t = 0.035$, $0.1,$ and $0.5$ seconds.
We take $\Delta t = 0.005$ on a mesh with 22896 elements. 
The linear interpolation (Matlab function {\em shading interp}) is applied to smoothly visualize the field over the domain.
As shown in Fig.~\ref{fig_traction_2}, the indentation and fluid pressure imposed on the top of the pia mater increase until $t = 0.1$\,s and decrease afterwards.
Accordingly, greater deformation is expected at $t = 0.1$\,s and less at later times.
Moreover, the fluid pressure reaches a maximum value of 9000 $(\mathrm{dyne/cm}^2)$ at $t = 0.1$\,s.
These can be observed in Fig.~\ref{fig_spinal}.
More specifically, as shown in panels (a), (c), and (e),  the displacement magnitude increases
from $t = 0.035$\,s to 0.1\,s (reaching a maximum of approximately 0.07\,cm)
and then decreases due to the reduced indentation.
The posterior of the pia mater has zero displacement, indicating a fixed boundary,
while lateral deformation occurs due to the applied loading.
Numerical pressure results in panels (b), (d), and (f) imply that the pressure value imposed on the top of the pia mater are around 7000, 9000, and 3000 $(\mathrm{dyne/cm}^2)$ at $t = 0.035$\,s, $0.1$\,s, and $0.5$\,s,
respectively.
These results are consistent with those shown in Fig.~\ref{fig_traction_2}.
The pressure at the bottom increases over time, indicating that fluid gradually
flows throughout the domain.

The number of iterations required by preconditioned MINRES and GMRES to reach a specified tolerance within a single time step is presented in Table~\ref{table-spinal}.
We consider time steps $\Delta t = 10^{-3}$ and $ 10^{-6}$. The results indicate that both MINRES and GMRES remain stable with respect to changes in time step size and mesh refinement.
It is worth noting that there are jumps in the Lamé constant and permeability across the three layers.
Although our analysis of MINRES and GMRES assumes constant parameters over the whole domain, the convergence of the iterative solvers remains robust even in the presence of parameter jumps.

\begin{figure}[htbp]
    \centering
    \subfigure[]{\includegraphics[height=2in]{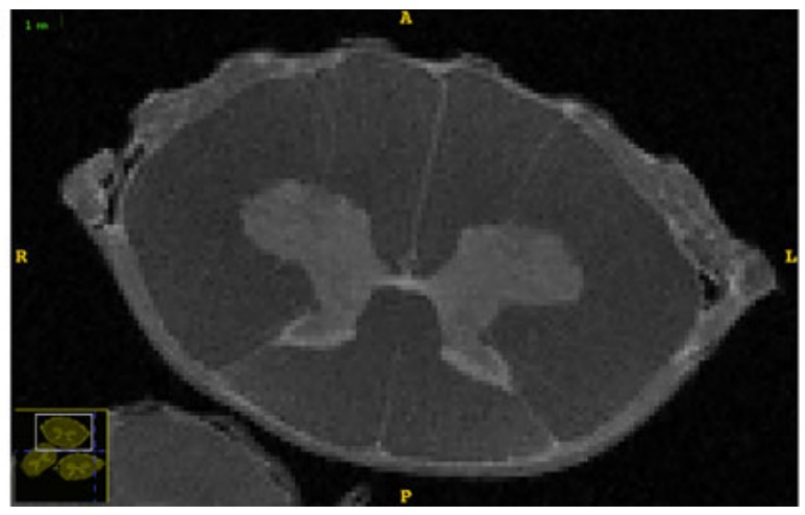}}
    %
    \subfigure[]{\includegraphics[height=2in]{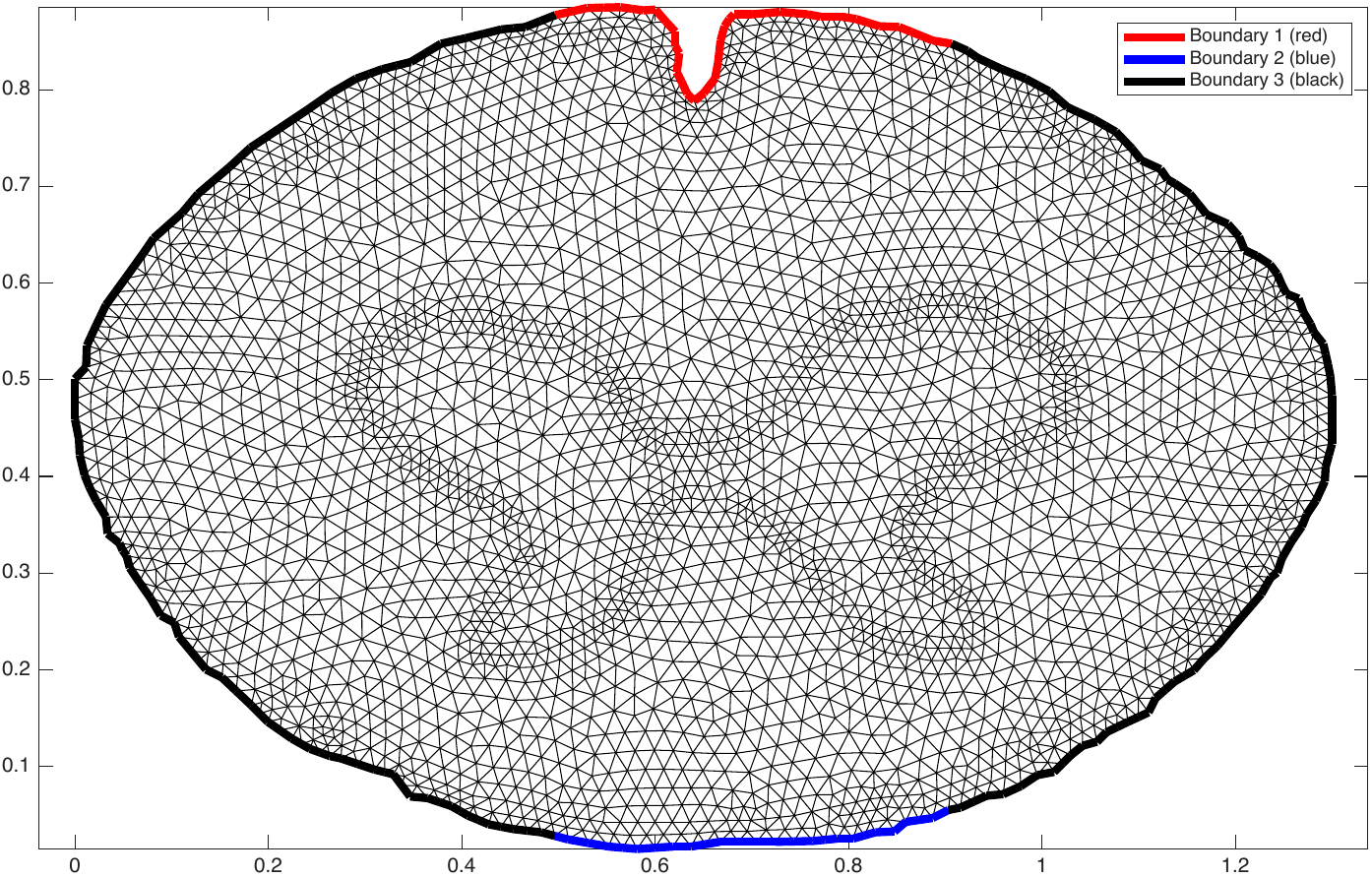}}
    \vspace{-0.1in}
     \caption{(a) High-resolution DTIs from a sheep cadaveric spinal cord \cite{Stoverud25042016}.
     (b) A sample triangular mesh with 5724 elements for the simulation.}
    \label{fig:spinal_DTI}
\end{figure}

\begin{figure}[htbp]
  \centering
  \includegraphics[width=0.5\textwidth]{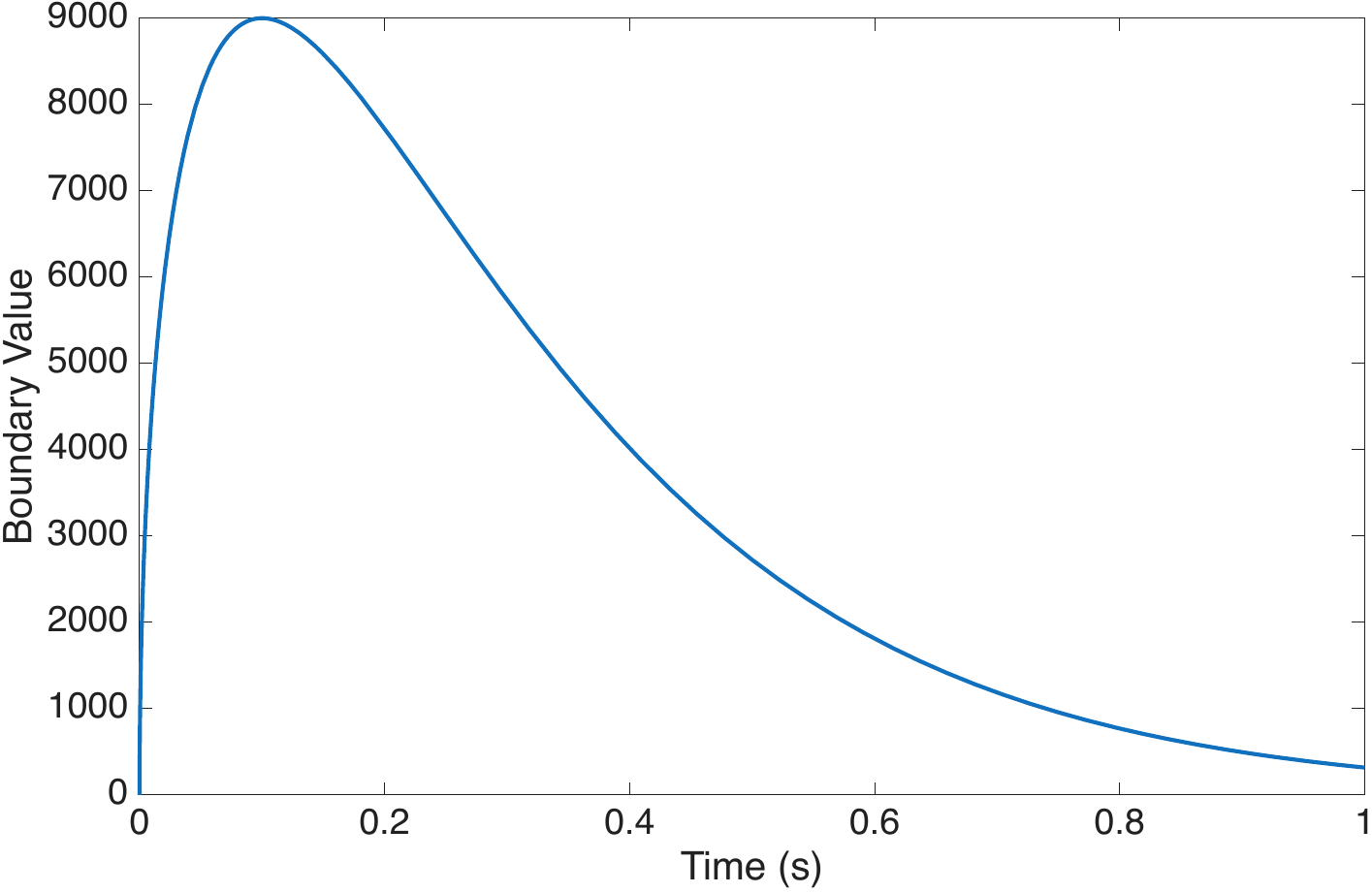}
  \vspace{-0.1in}
  \caption{A time-dependent downward traction and pressure imposed on the top of the pia mater.}
  \label{fig_traction_2}
\end{figure}
  
\begin{figure}[htbp]
  \centering
    \subfigure[Displacement magnitude at $t = 0.035$\,s]{
\includegraphics[width=0.45\textwidth]{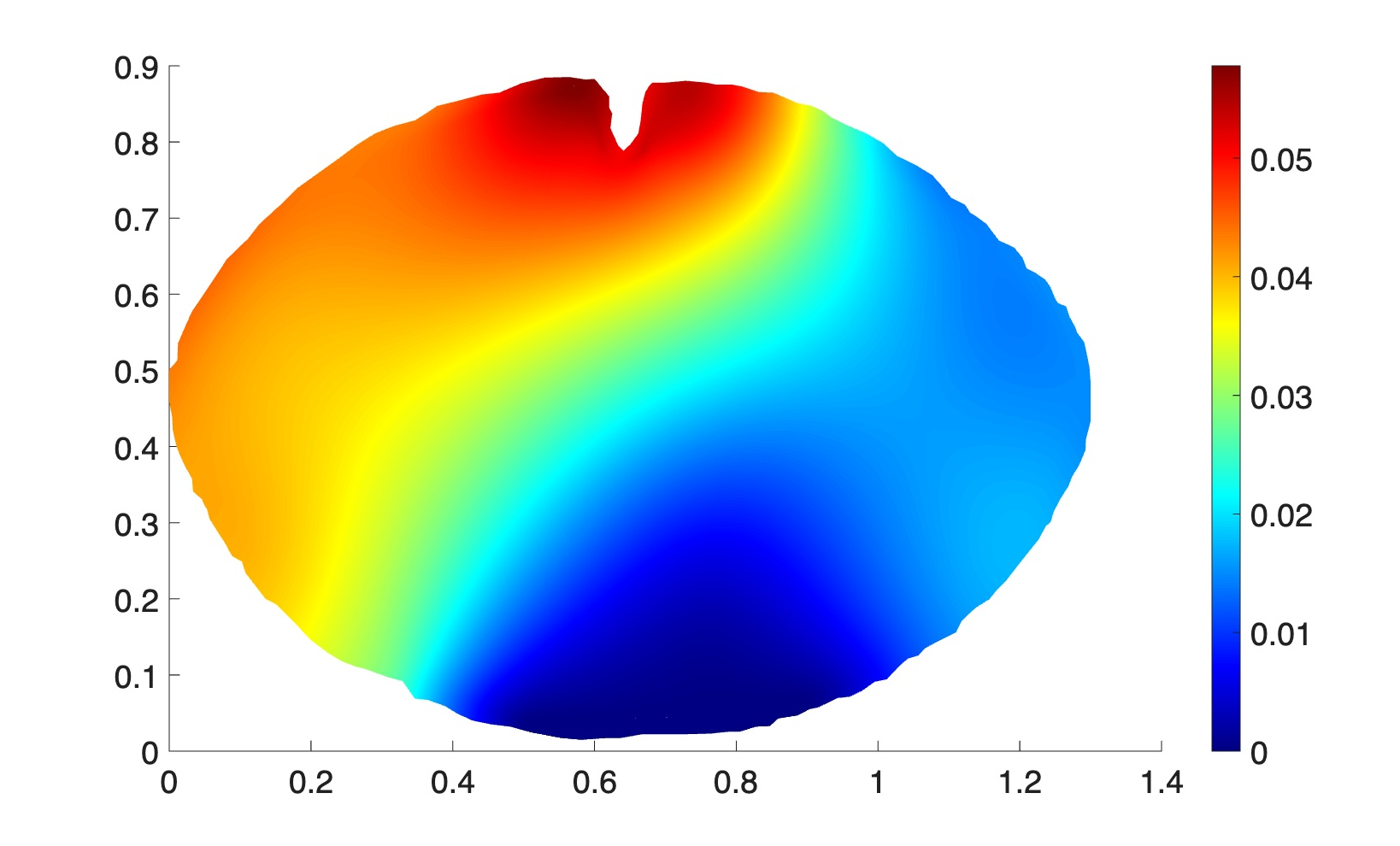}
  }
\subfigure[Pressure at $t = 0.035$\,s]{
\includegraphics[width=0.45\textwidth]{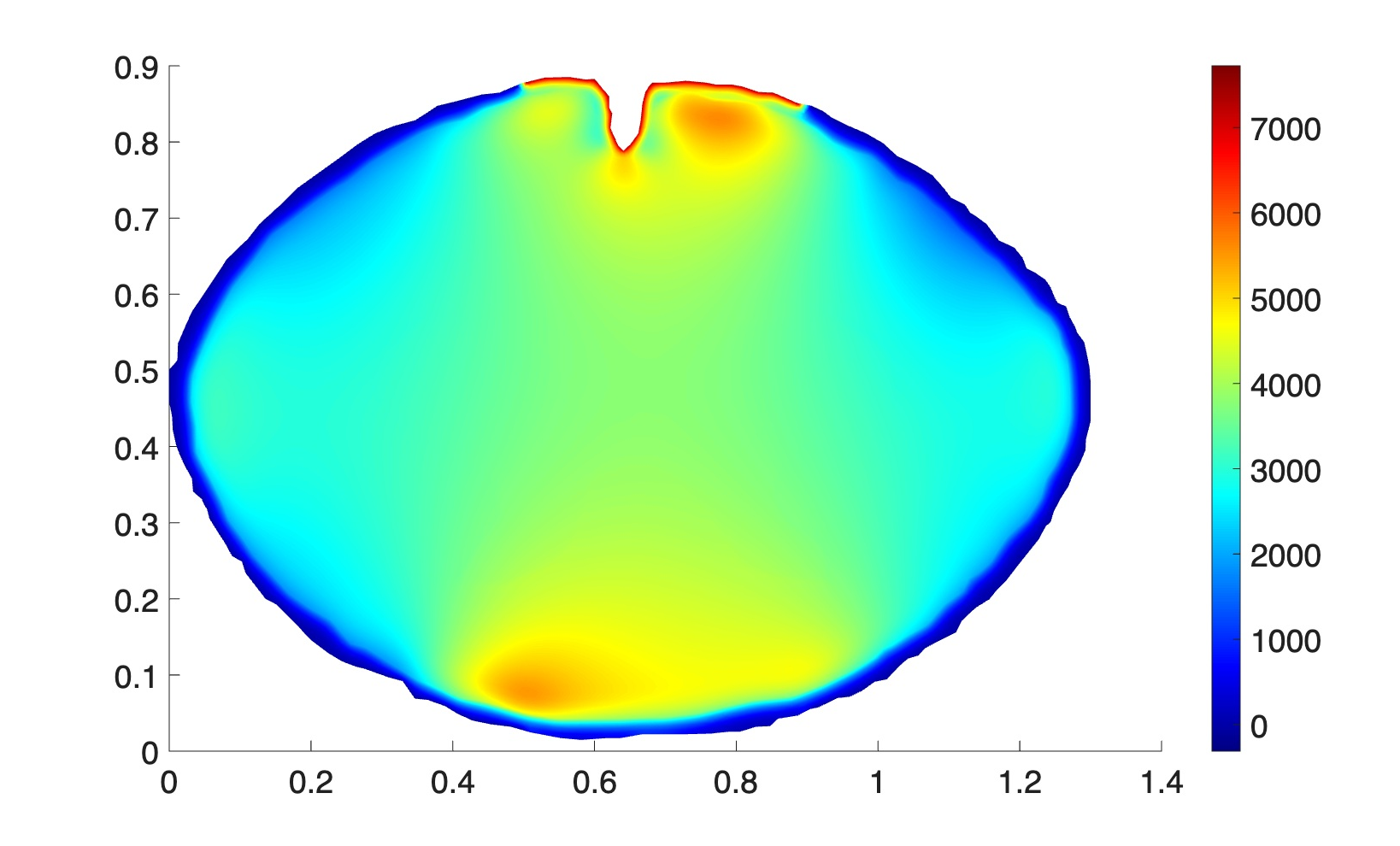}
  }
  \\
\subfigure[Displacement magnitude at $t = 0.1$\,s]{
\includegraphics[width=0.45\textwidth]{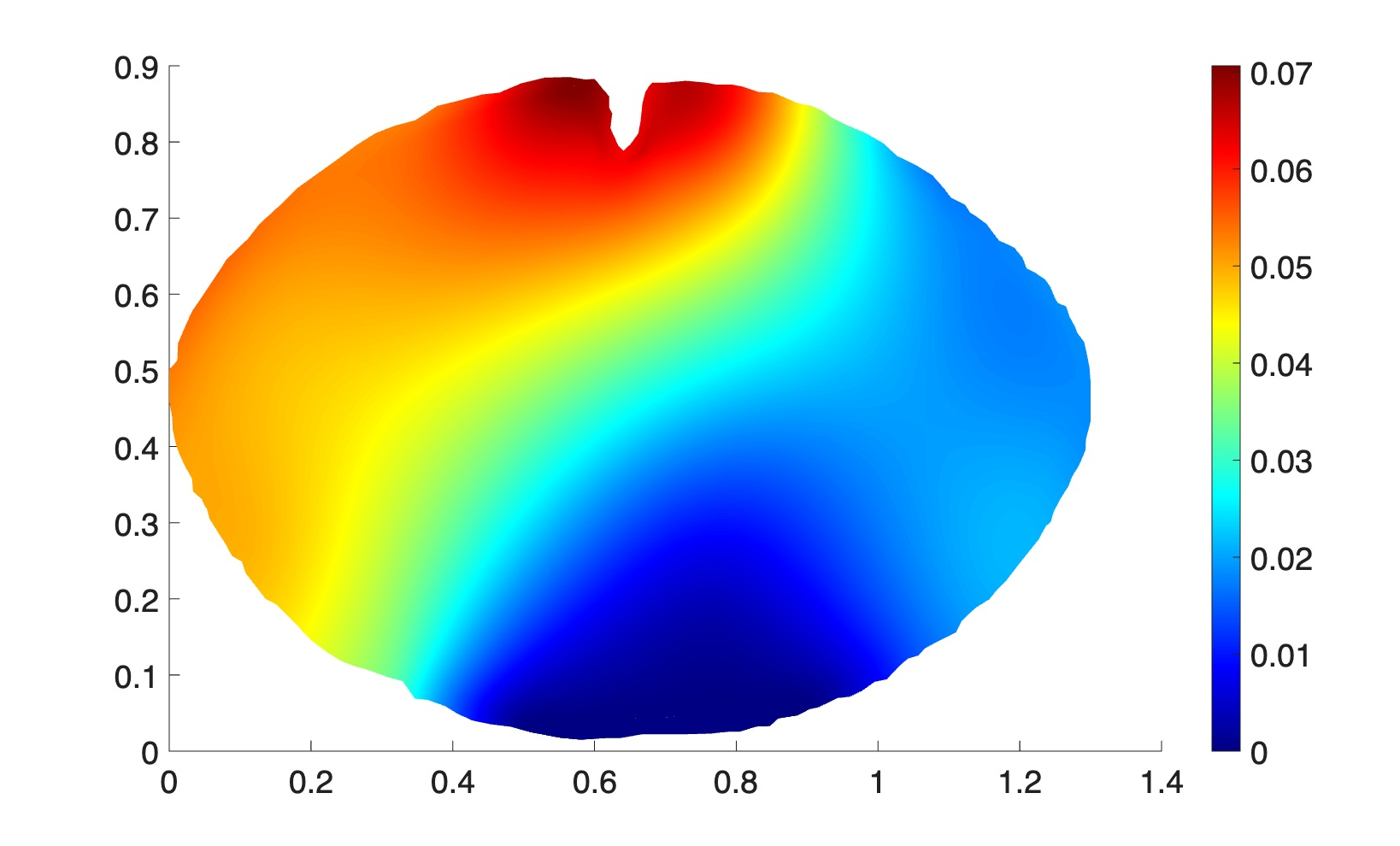}
  }
\subfigure[Pressure at $t = 0.1$\,s]{
\includegraphics[width=0.45\textwidth]{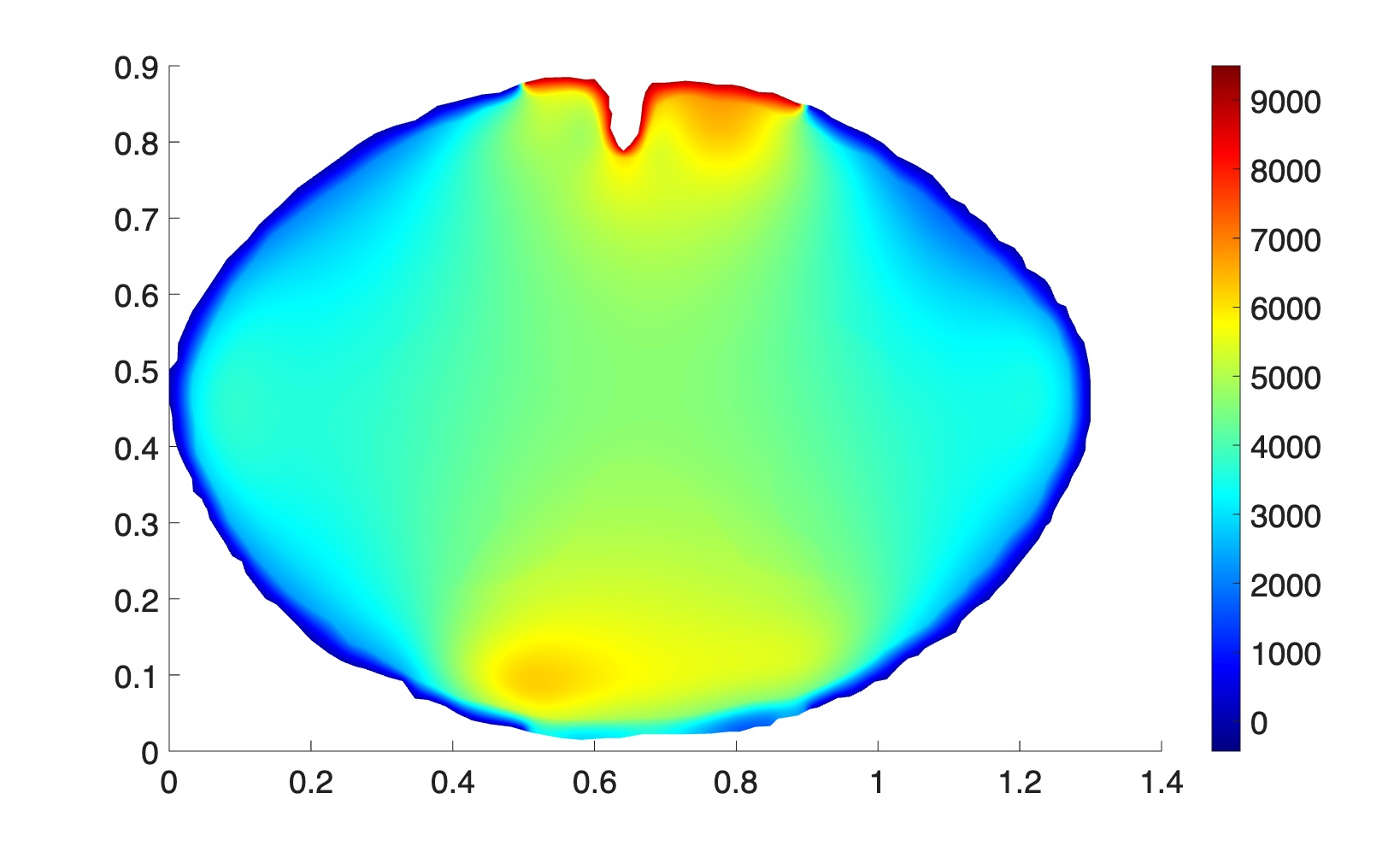}
  }
  \\
\subfigure[Displacement magnitude at $t = 0.5$\,s]{
\includegraphics[width=0.45\textwidth]{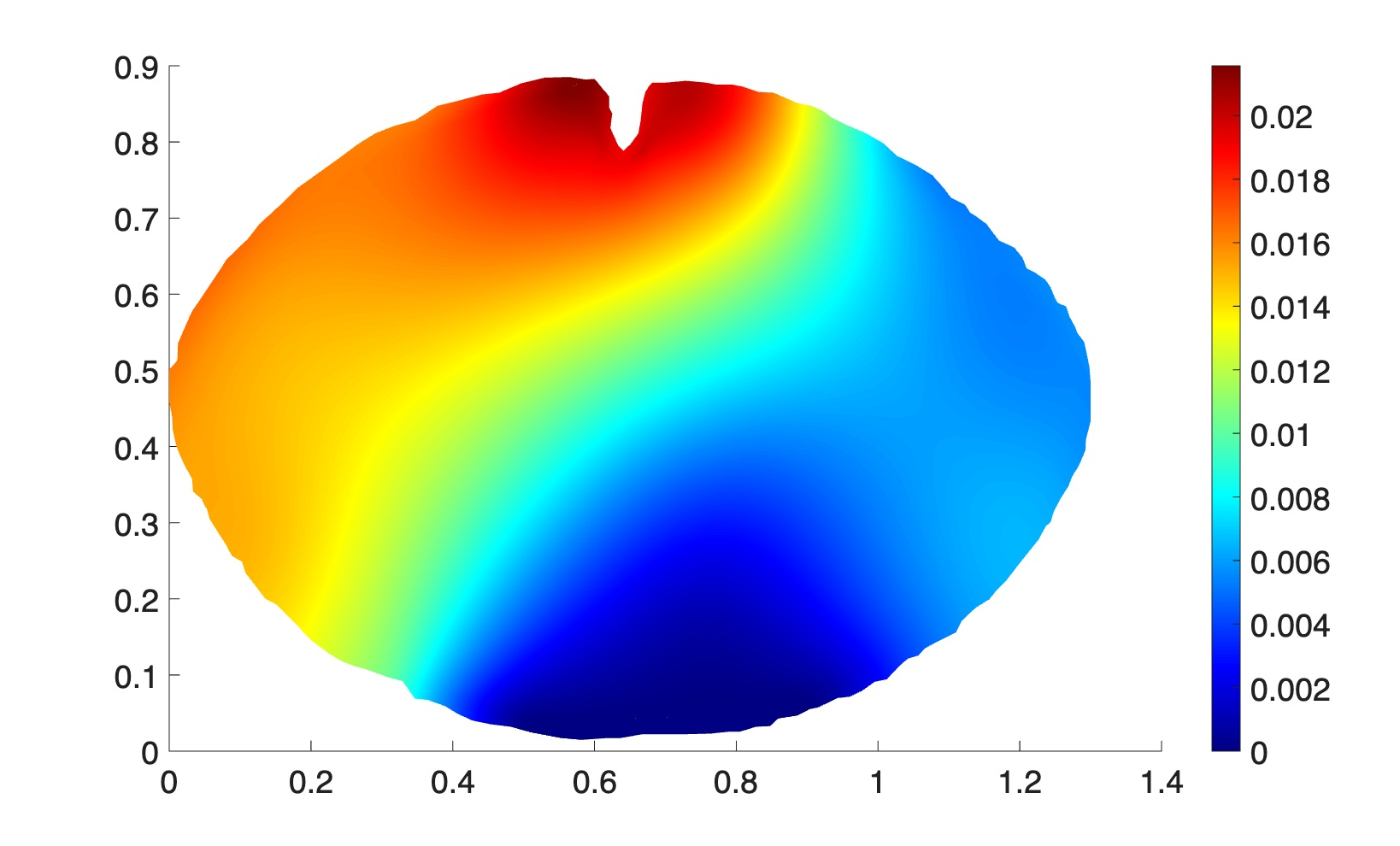}
  }
\subfigure[Pressure at $t = 0.5$\,s]{
\includegraphics[width=0.45\textwidth]{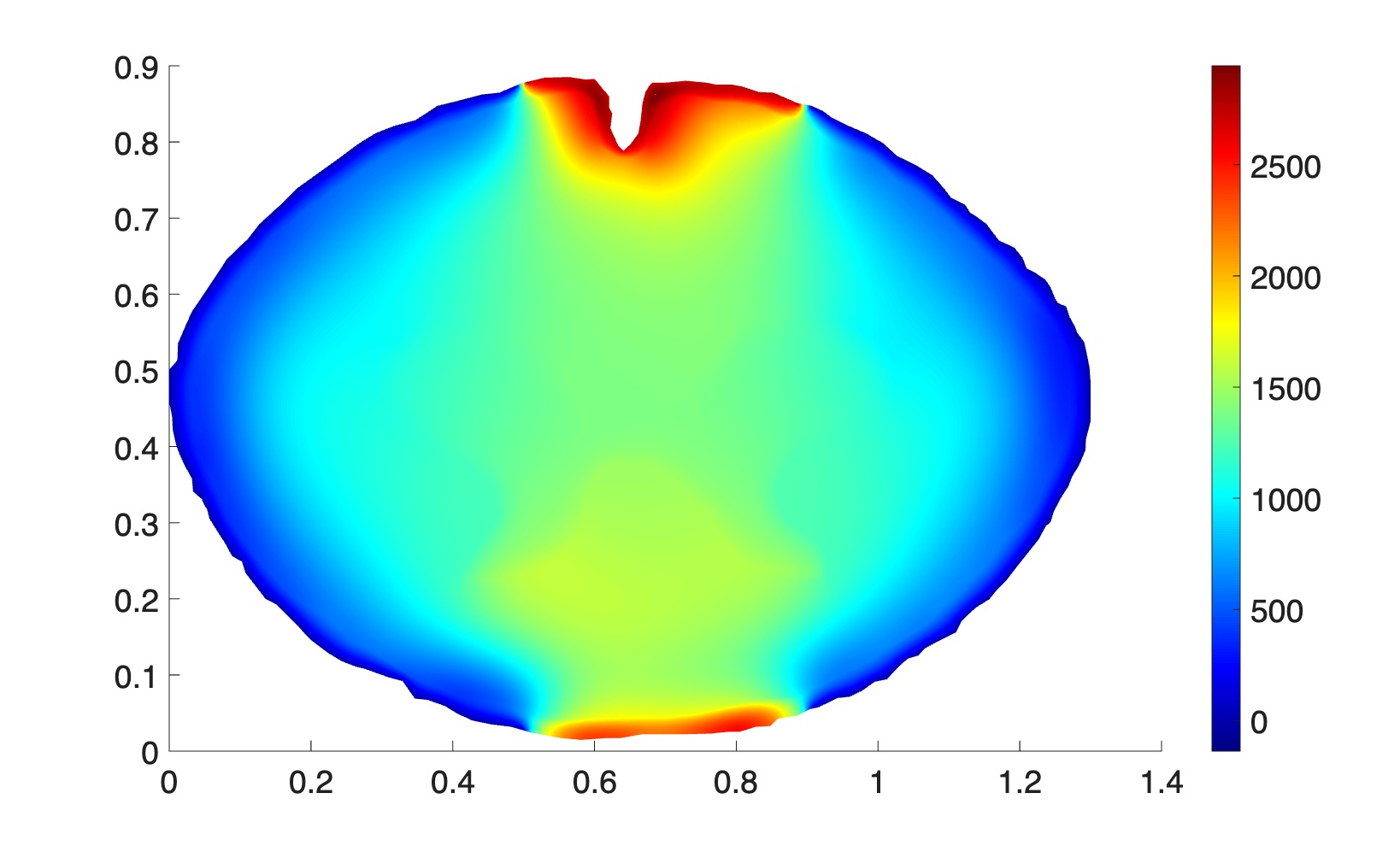}
  }  
  \caption{Numerical results for the system composed of pia mater and white and gray matter
  at different time steps.}
  \label{fig_spinal}
\end{figure}

%

\begin{table}[htb!]
\centering
\caption{Spinal cord simulation: The number of MINRES and GMRES iterations required to reach convergence for preconditioned systems, $\Delta t = 10^{-3} \text{ and } 10^{-6}$.}
\begin{tabular}{|c|c|c|c|c|}
\hline
\multirow{2}{*}{} & \multirow{2}{*}{$\Delta t$} & \multicolumn{3}{c|}{$N$} \\ \cline{3-5}
&  & 5724 & 22896 & 91584 \\ \hline
MINRES   & $10^{-3}$ &  98 & 107   &  107   \\
& $10^{-6}$ & 90  &   91 &   91  \\ \hline
GMRES  & $10^{-3}$ &  51  &   51  &51 \\
& $10^{-6}$  & 51   &  51   &  50 \\ \hline
\end{tabular}
\label{table-spinal}
\end{table}

\newpage
\section{Conclusions}
\label{SEC:conclusions}

In the previous sections, we analyzed the convergence behavior of MINRES and GMRES,
preconditioned with inexact block diagonal and block triangular Schur complement preconditioners, respectively,
for solving the linear poroelasticity problem. 
The problem is discretized in a hybrid approach: using Bernardi–Raugel elements for the solid displacement,
lowest-order weak Galerkin elements for the pressure, and the implicit Euler method for time integration.
When pure Dirichlet boundary conditions are applied to the displacement,
the leading block $\epsilon A_1 + A_0$ becomes nearly singular in the locking regime,
posing significant challenges for iterative solvers. In contrast, when mixed boundary conditions are applied,
this block remains nonsingular, ensuring that the full system is also nonsingular.

To address the difficulties arising under pure Dirichlet conditions, we introduced
a numerical pressure variable $\V{z}_h$, leading to a three-field formulation described in Section~\ref{sec::DBC}.
This formulation is regularized by adding $-\rho \V{w}\V{w}^T \V{z}_h = \V{0}$ into the third block equation,
where $\V{w}$ is defined in (\ref{w-1}). This regularization preserves the solution and
ensures that the eigenvalues of the preconditioned Schur complement of the regularized system
are bounded above and below by positive constants, as shown in Lemma~\ref{lem:eigen_bound_3field_tilde},
assuming a quasi-uniform mesh.

For the regularized system, we developed inexact block diagonal and triangular Schur complement preconditioners. Convergence bounds for the corresponding MINRES and GMRES methods, presented in Propositions~\ref{pro:MINRES_conv-DBC} and~\ref{pro:gmres_conv_DBC}, demonstrate that convergence is essentially independent of the mesh size $h$ and the locking parameter $\lambda$.

When mixed boundary conditions are used, the algebraic system remains nonsingular even in the locking regime, eliminating the need for regularization. In this case, MINRES and GMRES, preconditioned as before,
were analyzed in Section~\ref{sec::NBC}, with convergence results provided in Propositions~\ref{pro:MINRES_conv-NBC} and~\ref{pro:gmres_tri_poro-NBC}. These results again indicate mesh- and parameter-independent convergence.

Section~\ref{SEC:numerical} presents numerical experiments for linear poroelasticity in both two and three dimensions. The results confirm the benefits of regularization under pure Dirichlet conditions and
the robustness of the preconditioners with respect to mesh size and the locking parameter
in both boundary condition scenarios.
Additionally, we applied the method to a spinal cord simulation involving discontinuous material parameters.
Even in this complex, real-world setting, both solvers demonstrated robustness and efficiency.

\section*{Acknowledgments}

W.~Huang was supported in part by the Simons Foundation grant MPS-TSM-00002397.


\end{document}